\DeclareMathOperator{\lcm}{lcm}
\title{Senior Thesis - Equal Coverings}
\author{Andrew Velasquez-Berroteran}
\date{\today}
\begin{document}

\DeclarePairedDelimiter\ceil{\lceil}{\rceil}
\DeclarePairedDelimiter\floor{\lfloor}{\rfloor}
\newtheorem{definition}{Definition}
\newtheorem{proposition}{Proposition}
\newtheorem{lemma}{Lemma}
\newtheorem{corollary}{Corollary}
\newtheorem{example}{Example}
\newtheorem{theorem}{Theorem}
\newtheorem{note}{Note}
\newtheorem{conjecture}{Conjecture}
\newtheorem{remark}{Remark}
\onehalfspacing
\begin{titlepage}

\newcommand{\HRule}{\rule{\linewidth}{0.5mm}} 

\center 
 

\textsc{\LARGE Department of Mathematics \& Computer Science}\\[1.5cm] 


\HRule \\[0.4cm]
{ \huge \bfseries Equal Coverings of Finite Groups}\\[0.1cm] 
\HRule \\[2cm]
 

\begin{minipage}{0.5\textwidth}
\begin{flushleft} \large
\emph{Author:}\\
\textsc{Andrew Velasquez-Berroteran}\\\vspace{20pt} 
\emph{Committee Members:}\\
\textsc{Tuval Foguel (advisor)}\\
\textsc{Joshua Hiller}\\
\textsc{Salvatore Petrilli}\\
\end{flushleft}

\end{minipage}\\[1cm]


{\large April 27th, 2022}\\[2cm] 

\vfill 

\end{titlepage}
\tableofcontents
\newpage
\begin{abstract}
In this thesis, we will explore the nature of when certain finite groups have an equal covering, and when finite groups do not. Not to be confused with the concept of a cover group, a covering of a group is a collection of proper subgroups whose set-theoretic union is the original group. We will discuss the history of what has been researched in the topic of coverings, and as well as mention some findings in concepts related to equal coverings such as that of equal partition of a group. We develop some useful theorems that will aid us in determining whether a finite group has an equal covering or not. In addition, for when a theorem may not be entirely useful to examine a certain group we will turn to using \texttt{\texttt{GAP}} (Groups, Algorithms, Programming) for computational arguments.

\end{abstract}
\textbf{Motivation}\vspace{5pt}\\
The question of determining how a group may possess an equal covering is an interesting since in addition to wondering if a group can be the set-theoretic union of some of its proper subgroups, we would also like to see if there is a such a collection with all member being the same size. As we will see soon, non-cyclic groups all possess some covering. If we add, however, the restriction mentioned above then the problem of determining such groups becomes a lot more complicated. We hope to determine from a selection of finite groups, which ones have an equal covering and which do not. Our plan will first proceed with familiarizing ourselves with useful definitions, such as that of the exponent of a group. Next, we will mention general research within the topic of coverings in hopes some finding from within the past century may serve us. Afterwards, we will derive our own theorems related to equal coverings of groups. Following that, we will then utilize the theorems presented, as well as \texttt{GAP} for when the theorems alone do not help, in aiding us to determine which groups up to order 60  and some finite (non-cyclic) simple groups have equal coverings.
\section{Introduction}
The topic of coverings of groups is a relatively novel one, only having been researched within the past 120 years. Equal coverings, on the other hand, has not been researched as much and will be the focus of this paper. Given a group $G$ and if $\Pi$ is a a covering of $G$, then it is an equal covering of $G$ if for all $H,K \in \Pi$, we have $H$ and $K$ are of the same order. Now, one thing that must be clear is that not every group will have a covering, let alone an equal covering. In other words, when we know that $G$ has no covering at all, then it is not worthwhile attempting to find an equal covering or determine if it has one or not. To begin this discussion, we will first take notice of a very important fact that distinguishes groups that have coverings, from those that do not. From this point on, unless otherwise specified, we will be concerned with finite coverings of groups, or coverings that have finitely many proper subgroups of the original group.\vspace{5pt}\\
If $G$ is a group, let $\sigma(G)$ denote the smallest cardinality of any covering of $G$. If $G$ has no  covering, then we would simply write $\sigma(G) = \infty$. Below is a relatively simple but powerful well-known theorem.
\begin{theorem}[\cite{scorza}]\label{Cyclic}
Let $G$ be a group. $G$ has a covering if and only if $G$ is non-cyclic.
\end{theorem}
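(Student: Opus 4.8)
The plan is to prove the two implications separately, treating the ``only if'' direction by contraposition. The whole argument is short, so I describe it in full outline; the only thing that needs attention is pinning down exactly where the non-cyclicity hypothesis is used.

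First I would show that a cyclic group has no covering. Suppose $G = \langle g \rangle$. The key observation is that any subgroup $H \leq G$ which contains $g$ must equal $G$, so every \emph{proper} subgroup of $G$ misses the generator $g$. Hence for any family $\Pi$ of proper subgroups we have $g \notin \bigcup_{H \in \Pi} H$, so $\bigcup_{H \in \Pi} H \neq G$ and $\Pi$ is not a covering. This step makes no use of finiteness, of $G$ or of $\Pi$.

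For the converse, suppose $G$ is non-cyclic. Then for each $x \in G$ the cyclic subgroup $\langle x \rangle$ is \emph{proper}, since by hypothesis no single element generates $G$; and trivially $x \in \langle x \rangle$. Therefore $\Pi = \{\, \langle x \rangle : x \in G \,\}$ has union equal to $G$, which exhibits a covering of $G$ by proper subgroups. If $G$ is finite this $\Pi$ is finite, hence a covering in the sense used throughout the rest of the paper; alternatively, since every proper subgroup of a finite group lies inside a maximal one, the set of all maximal subgroups of $G$ is also such a covering, and it is the more natural object to keep in mind when one later wants to compute or bound $\sigma(G)$.

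I do not expect a genuine obstacle here: the statement is elementary, and the only place the hypothesis is invoked is the verification that the subgroups $\langle x \rangle$ are proper. The one subtlety worth flagging in a remark is the infinite case: a non-cyclic infinite group such as $\mathbb{Z} \times \mathbb{Z}$ is covered by its (infinitely many) cyclic subgroups but need not admit a \emph{finite} covering, so if one insists on finitely many members the sharp statement is the theorem of Neumann that a group has a finite covering if and only if it has a finite non-cyclic quotient. Since this thesis restricts attention to finite groups, the elementary argument above is all that is required.
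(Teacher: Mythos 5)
Your proposal is correct and takes essentially the same route as the paper: the converse uses the identical covering $\{\langle x\rangle : x \in G\}$, and your contrapositive argument for the forward direction (a generator cannot lie in a proper subgroup) is the same observation the paper uses directly. Your closing remark on the infinite case is a nice addition but not needed, since the paper itself flags that subtlety separately via the example of $\mathbb{Q}^{+}$.
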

\begin{proof}
Suppose $G$ has an covering. By definition, this is a collection of proper subgroups, where each element of $G$ must appear in at least one of the subgroups. It $x \in G$, then $\langle x \rangle$ must be a proper subgroup of $G$, so $G$ cannot be generated by $x$. Hence, $G$ is non-cyclic.\vspace{5pt}\\
Conversely, suppose $G$ is non-cyclic. Consider the collection of subgroups $\Pi = \{ \langle a \rangle: a \in G\}$. Since $G$ is non-cyclic, $\langle a  \rangle$ is a proper subgroup of $G$ for all $a \in G$, so $\Pi$ is a covering of $G$.
\end{proof}
\noindent A consequence of Theorem \ref{Cyclic} is that all groups of prime order do not have a covering, since all groups of prime order are cyclic. Since this means we will not take much interest in cyclic groups we have limited the number of groups to analyze for having an equal covering, even if the proportion of groups are reduced by very little.\vspace{5pt}\\
In this investigation, we will work primarily with finite groups. Say if $G$ is a finite non-cyclic group, would there be a way to determine $\sigma(G)$, or at the very least find bounds on $\sigma(G)$? In a moment we will look at what has been researched in domain of coverings of groups, which will involve some work in answering this question for some groups. But before we do that, we will mention and prove two well-known theorems related to this question.
\begin{theorem}\label{Union2}
Let $G$ be a non-cyclic group. If $H$ and $K$ are proper subgroups of $G$, then $G$ cannot be the union of $H$ and $K$. In other words, $\sigma(G) \neq 2$ for any non-cyclic group $G$.
\end{theorem}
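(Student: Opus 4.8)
The plan is to argue by contradiction using a simple counting/coset argument. Suppose $G = H \cup K$ with $H, K$ proper subgroups. Since neither $H$ nor $K$ can contain the other (if $H \subseteq K$ then $G = K$, contradicting properness of $K$), there exist elements $h \in H \setminus K$ and $k \in K \setminus H$. I would then examine the product $hk$ (or $kh$): it must lie in $G = H \cup K$, so either $hk \in H$ or $hk \in K$. If $hk \in H$, then since $h \in H$ and $H$ is a subgroup, $k = h^{-1}(hk) \in H$, contradicting $k \notin H$. Symmetrically, if $hk \in K$, then $h = (hk)k^{-1} \in K$, contradicting $h \notin K$. Either way we reach a contradiction, so no such decomposition exists.

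The key steps in order: first, observe that if a covering of size $2$ existed it would consist of two proper subgroups $H$ and $K$ with $G = H \cup K$; second, note that $H \not\subseteq K$ and $K \not\subseteq H$ (otherwise the union collapses to a single proper subgroup equal to $G$, impossible); third, pick witnesses $h \in H \setminus K$ and $k \in K \setminus H$; fourth, run the product argument above on $hk$ to derive the contradiction. Finally, conclude that $\sigma(G) \neq 2$, and note this holds regardless of whether $G$ is non-cyclic — though the hypothesis is stated for non-cyclic $G$ since cyclic groups have no covering at all by Theorem \ref{Cyclic}.

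I do not expect a serious obstacle here; the only thing to be careful about is the degenerate case where one subgroup is contained in the other, which is why the second step is needed before selecting the witness elements. It is also worth remarking that this argument is purely group-theoretic and does not use finiteness, so the statement holds for arbitrary non-cyclic groups; the finite hypothesis elsewhere in the paper is not needed for this particular result.
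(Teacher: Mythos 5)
Your proposal is correct and follows essentially the same argument as the paper: rule out containment of one subgroup in the other, select witnesses $h \in H \setminus K$ and $k \in K \setminus H$, and derive a contradiction from $hk \in H$ or $hk \in K$. Your added remark that finiteness and non-cyclicity are not actually needed is accurate and a nice observation, but the core proof is identical.
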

\begin{proof}
Suppose $H$ and $K$ are proper subgroups such that $G = H \cup K$. Since it cannot be possible for either $H \subseteq K$ or $K \subseteq H$, we must have there is some $h \in H$  but $h \notin K$, and there is some $k \in K$ but $k \notin H$. Since $hk \in G$, $hk \in H$ or $hk \in K$. Observe if $hk \in H$, then since $h^{-1} \in H$, we have $h^{-1}(hk) = (h^{-1}h)k = k \in H$, which is impossible. Similarly, if $hk \in K$ then $(hk)k^{-1} = h(kk^{-1}) = h \in K$. We have a contradiction, so we cannot have $G$ cannot be the union of $H$ and $K$.
\end{proof}
\begin{proposition}\label{Bounds}
If $G$ be a non-cyclic group of order $n$, then $2 < \sigma(G) \leq n - 1$.
\end{proposition}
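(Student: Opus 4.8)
The plan is to prove the two inequalities separately, each as a quick consequence of the results already established. The lower bound $2 < \sigma(G)$ is essentially immediate. Since $G$ is non-cyclic, Theorem \ref{Cyclic} guarantees that $G$ has a covering, so $\sigma(G)$ is a genuine positive integer rather than $\infty$. A single proper subgroup cannot equal $G$, so $\sigma(G) \neq 1$, and Theorem \ref{Union2} rules out $\sigma(G) = 2$. Hence $\sigma(G) \geq 3$, which is precisely the strict inequality claimed.

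For the upper bound I would revisit the covering $\Pi = \{\langle a \rangle : a \in G\}$ used in the proof of Theorem \ref{Cyclic} and simply trim it. The trivial subgroup $\langle e \rangle$ is contained in $\langle a \rangle$ for every $a$, so discarding it leaves $\Pi' = \{\langle a \rangle : a \in G,\ a \neq e\}$, which is still a covering: each non-identity $a$ lies in $\langle a \rangle \in \Pi'$, and $e$ lies in every member of $\Pi'$ (here $\Pi'$ is nonempty because a non-cyclic group has order at least $4$). Every member of $\Pi'$ is a proper subgroup since $G$ is non-cyclic. As $\Pi'$ has at most $|G \setminus \{e\}| = n - 1$ elements, we conclude $\sigma(G) \leq n - 1$.

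There is no serious obstacle; the only points needing a little care are verifying that $\Pi'$ is nonempty and still covers $G$ once the trivial subgroup is removed. It is worth remarking that the upper bound is sharp: for the Klein four-group $V_4$ we have $n = 4$, and its three subgroups of order $2$ form the unique minimal covering, so $\sigma(V_4) = 3 = n - 1$. One could sharpen the bound to $n - 2$ whenever $G$ contains an element of order at least $3$, since then $a$ and $a^{-1}$ are distinct yet generate the same cyclic subgroup, but this refinement is not needed for the statement as given.
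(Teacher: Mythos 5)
Your proof is correct and follows essentially the same route as the paper: the lower bound via Theorem \ref{Union2} together with the observation that one proper subgroup cannot exhaust $G$, and the upper bound via the covering by cyclic subgroups generated by the $n-1$ nonidentity elements. Your added care about discarding the trivial subgroup and noting that the collection has \emph{at most} $n-1$ members is a minor tidying of the same argument, not a different approach.
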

\begin{proof}
Suppose $G$ is a non-cyclic group of order $n$. Clearly no covering cannot consist of one element, since that would indicate it contains $G$, not a possibility. Next, by Theorem \ref{Union2}, any covering must have more than two proper subgroups of $G$. So, $\sigma(G) > 2$.\\
Now, let $a_1$, $a_2$, ..., $a_{n-1}$ represent all $n-1$ nonidentity elements of $G$. Since $G$ is non-cyclic, $\langle a_i \rangle < G$ for $1 \leq i \leq n-1$. If $\Pi = \{\langle a_i \rangle:\ 1 \leq i \leq n-1\}$, then $\Pi$ is a collection of proper $n-1$ subgroups of $G$. Furthermore, the union of all these subgroups is $G$, so $\Pi$ is a covering of $G$. It follows $\sigma(G) \leq n-1$.  Therefore, $2 < \sigma(G) \leq n-1$.
\end{proof}
We consider Proposition 1 above just a proposition and not a theorem since, as we will see in the history section, there has been work done to find a smaller range for $\sigma(G)$ for different finite groups $G$ as well as specific values for certain groups.\vspace{5pt}\\
As mentioned before, we will only discuss finite groups in this peper, but as a brief mention the possibility of infinite groups being a union of proper subgroups is a bit mystifying. In regards to Theorem \ref{Cyclic}, there is a reason we needed to state beforehand that the groups we refer to will need to be finite. Take for example the group $\mathbb{Q}^{+}$ under multiplication. While this group may not be cyclic, Haber and Rosenfeld \cite{haber1959groups} demonstrated that it's actually impossible for $\mathbb{Q}^+$ be a union of proper subgroups. So in addition to the overall complexity that comes with dealing with infinite groups, there will be theorems presented in this thesis that may not hold true for infinite groups satisfying the necessary assumptions.
\section{History}
\subsection*{On the General History of  Group Coverings}
\indent Before we continue with our discussion talking about equal coverings, let's take a look at some things that have been researched within the topic of coverings of groups, as well as a mention on coverings of loops and equal partitions.\vspace{5pt}\\
\indent The first instance of there being a discussion of representing groups as a general union of proper subgroups appeared in a book from G. Scorza in 1926. Two decades prior, G.A. Miller had actually touched on the concept of partitions which we will dedicate its own subsection to later in this section. Although this was the first instance wherein a mathematician posed a problem relevant to the idea of coverings for groups, one source of great motivation for inquiry came from P. Erdös.\vspace{5pt}\\
\indent Erdös is said to be a very influential mathematician, with some arguing he is the most prolific one from the last century. He had done extensive work in various fields of mathematics, especially in the realm in algebra. Scorza had originally come up with the idea of coverings for groups in the 1920s, and in a matter of less than half a century later, Erdös posed somewhat of a related question. The question can ultimately be boiled down to the following \cite{neumann_1976}:\\
If $G$ is a group and there is no infinite subset of elements which do not commute, is there a finite number of such subsets? \\
While Erdös was essentially talking of coverings for groups, but by particular subsets and not proper subgroups, his question helped mathematicians such as B.H Neumann looked at groups with this property, and some other mathematicians such as H.E. Bell and L.C. Kappe look at a ring theory problem analogous to Erdös' \cite{bell1997analogue}. Thus we definitely say Erdös served to help bring attention to the theory of coverings of groups, which Neumann and Kappe both looked more into as we will see later in this section.\vspace{5pt}\\
\indent There was some work already done within this topic even prior to Erdös' involvement, so we will continue on from the relatively early twentieth century. Theorem \ref{Union2} has showed us it's impossible to write a group as union of two proper subgroups, but it is possible for a group to be a union of three of its proper subgroups and as it turns out, there's a theorem for this. This theorem and Theorem \ref{Cyclic} have repeatedly been mentioned and proven in multiple papers such as in \cite{haber1959groups} and \cite{bruckheimer}, but first appeared in Scorza's paper \cite{scorza}.
\begin{theorem}[\cite{scorza}]
If $G$ is a group, then $\sigma(G) = 3$ if and only if for some $N \vartriangleleft G$, $G/N \cong V$, the Klein 4-group.
\end{theorem}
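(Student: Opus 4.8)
The plan is to prove both directions of the equivalence $\sigma(G) = 3 \iff G/N \cong V$ for some $N \vartriangleleft G$.

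\medskip

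\noindent\textbf{($\Leftarrow$) The easy direction.} Suppose $N \vartriangleleft G$ with $G/N \cong V$. The Klein four-group $V$ has exactly three proper nontrivial subgroups $A_1, A_2, A_3$, each of order $2$, and clearly $V = A_1 \cup A_2 \cup A_3$. Pulling these back along the canonical projection $\pi\colon G \to G/N$, the subgroups $H_i = \pi^{-1}(A_i)$ are proper subgroups of $G$ (since each $A_i$ is proper in $V$), and because $\pi$ is surjective and every element of $V$ lies in some $A_i$, every element of $G$ lies in some $H_i$. Hence $\{H_1, H_2, H_3\}$ is a covering, so $\sigma(G) \le 3$; combined with Theorem \ref{Union2} (and the fact that $G$, having a proper covering, is non-cyclic) we get $\sigma(G) = 3$.

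\medskip

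\noindent\textbf{($\Rightarrow$) The harder direction.} Suppose $\sigma(G) = 3$, say $G = H_1 \cup H_2 \cup H_3$ with each $H_i$ proper. First I would argue no $H_i$ is contained in the union of the other two (else we'd have a covering of size $2$, contradicting Theorem \ref{Union2}), so each $H_i$ contains an element lying in no other $H_j$. The key structural steps are: (1) show $N := H_1 \cap H_2 \cap H_3$ is normal, and in fact each pairwise intersection $H_i \cap H_j$ equals $N$; (2) show each $H_i$ has index $2$ in $G$. For (2), the standard counting argument: if, say, $[G:H_1] \ge 3$, one shows the three cosets cannot be covered — more carefully, one uses that $H_1 \cup H_2 \cup H_3 = G$ forces $|H_1| + |H_2| + |H_3| \ge |G| + $ (overlap corrections), and with inclusion–exclusion plus $H_i \cap H_j \supseteq N$ one derives $[G:H_i] = 2$ for all $i$. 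Since each $H_i$ has index $2$, each is normal, so $N$ is normal. Then $G/N$ embeds into (or is seen directly to be) a group of order $4$ in which $H_1/N, H_2/N, H_3/N$ are three distinct subgroups of order $2$; the only group of order $4$ with three subgroups of order $2$ is $V$, so $G/N \cong V$.

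\medskip

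\noindent The main obstacle will be step (2) — pinning down that each $H_i$ has index exactly $2$ and that the pairwise intersections all coincide with $N$. The cleanest route is probably: since $H_1 \ne G$, pick $g \notin H_1$; then the coset $gH_1$ is covered by $H_2 \cup H_3$, and one shows $gH_1 \subseteq H_2$ or $gH_1 \subseteq H_3$ cannot both fail badly — analyzing how a coset of $H_1$ meets $H_2$ and $H_3$ shows $H_2$ and $H_3$ must each be unions of $H_1 \cap H_2$ (resp.\ $H_1 \cap H_3$) cosets, forcing the index-$2$ conclusion. Once the indices are controlled, the identification of $G/N$ with $V$ is immediate from the classification of groups of order $4$.
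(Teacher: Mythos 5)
The paper does not actually prove this theorem: it is quoted from Scorza \cite{scorza}, with the remark that proofs appear in \cite{haber1959groups} and \cite{bruckheimer}. So your proposal can only be judged against the standard argument. Your ($\Leftarrow$) direction is complete and correct: the preimages of the three order-$2$ subgroups of $V$ under $\pi\colon G\to G/N$ are proper and cover $G$, so $\sigma(G)\le 3$, and Theorem \ref{Union2} forbids $\sigma(G)=2$.

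The ($\Rightarrow$) direction has the right skeleton but a genuine gap at exactly the point you flag: neither of your two key claims is actually established, and the tool you propose for step (2) does not suffice. Inclusion--exclusion gives $|G|=\sum_i|H_i|-\sum_{i<j}|H_i\cap H_j|+|N|$, but without first knowing that the pairwise intersections all equal $N$ this yields no contradiction for, say, index pattern $(2,2,3)$; one can choose intersection sizes consistent with Lagrange that satisfy the identity, so counting alone cannot pin the indices down. The standard way to close both steps is element-chasing on the (necessarily irredundant) covering $G=H_1\cup H_2\cup H_3$. First, if $x\in H_1\cap H_2$ but $x\notin H_3$, pick $c\in H_3\setminus(H_1\cup H_2)$; then $xc$ lies in none of the three subgroups, a contradiction, so $H_i\cap H_j=N$ for all $i\ne j$ --- this is precisely the Haber--Rosenfeld lemma the paper quotes right after its definition of an irredundant covering. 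Second, fix $a\in H_1\setminus(H_2\cup H_3)$; for $h\in H_2\setminus N$ the product $ah$ is forced into $H_3\setminus N$, and left multiplication is injective, so $|H_2|\le|H_3|$; by symmetry all three orders equal some $m$, whence $|G|=3m-2|N|$ and $[G:H_1]=3-2|N|/m$ is an integer strictly between $1$ and $3$, forcing index $2$. (Alternatively, Theorem \ref{haber} as quoted in the paper delivers step (2) directly: part (i) with $\sigma(G)=3$ forces the smallest prime divisor of $|G|$ to be $2$, and part (ii) then gives an index-$2$ component, which is normal, so all three components have index $2$.) From there your conclusion is fine: $N=H_1\cap H_2$ is normal of index $4$ and $G/N$ has more than one subgroup of order $2$, hence is $V$. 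Note finally that all of these counting arguments silently assume $G$ finite, whereas the statement is for arbitrary $G$; in general one must replace the counting by the coset argument you only gesture at.
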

An immediate consequence of this theorem is that the lower bound of the inequality given in Theorem \ref{Bounds} can be changed to 3 and so now for any finite non-cyclic group $G$
we have $3 \leq \sigma(G) < n-1$. Immediately we see that smallest non-cyclic group that has a covering is indeed $V$ and it should be evident that $\{\langle(0,1)\rangle, \langle (1,0)\rangle, \langle (1,1)\rangle\}$ forms a covering of $V$. In fact, it happens to be an equal covering of $V$.
\begin{definition}
Given a group $G$ and a covering $\Pi = \{H_1, H_2 ,..., H_n\}$, we say $\Pi$ is \textbf{irredundant}( or \textbf{minimal}) if for any $H_i \in \Pi$, $H_i$ is not contained in the union of the remaining $H's$ in $\Pi$. In other words, for each $i \in \{1,..., n\}$ there exists $x_i \in H_i$ such that $x_i \notin \bigcup\limits_{j\neq i}H_j$.
\end{definition}
Ideally when we come up with a covering for a group, we want the least amount of subgroups necessary. \cite{haber1959groups} actually had proven that if $\Pi = \{H_i\}$ is an irredundant covering of $G$ then for any $H_i \in \Pi$, $H_i$ contains the intersection of the remaining $H's$ in $\Pi$. Further in their paper they had shown the following two statements for any finite group $G$:
\begin{theorem}[\cite{haber1959groups}]\label{haber}
(i) If $p$ is the smallest prime divisor of $|G|$ then $G$ cannot be the union of $p$ or fewer proper subgroups.\\
(ii) If $p$ is the smallest prime divisor of $|G|$ and  $\Pi = \{H_i\}$ is a covering of $p+1$ proper subgroups, there is some $H_i$ for which $[G:H_i] = p$. If such an $H_i$ is normal, then all $H's 
\in \Pi$ have index $p$ and $p^2$ divides $|G|$.
\end{theorem}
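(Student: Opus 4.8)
The plan is to prove both parts with a single counting device: comparing a fixed member of an irredundant covering with its cosets. For part~(i), suppose $G=H_1\cup\cdots\cup H_k$ with $k\le p$ proper subgroups. After discarding redundant members we may assume the covering is irredundant, say $\Pi=\{H_1,\dots,H_r\}$ with $2\le r\le k\le p$ (we cannot have $r=1$, since a lone member of a covering would be all of $G$). Fix $g\in G\setminus H_1$ and look at the coset $gH_1$; it is disjoint from $H_1$, hence covered by $H_2,\dots,H_r$. The elementary observation is that whenever $a,b\in gH_1\cap H_j$ we have $a^{-1}b\in H_1\cap H_j$, so $gH_1\cap H_j$ lies inside a single coset of $H_1\cap H_j$ and therefore $|gH_1\cap H_j|\le|H_1\cap H_j|$. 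Irredundancy forces $H_1\cap H_j$ to be a \emph{proper} subgroup of $H_1$, and since every prime dividing $|H_1|$ also divides $|G|$ and so is at least $p$, we get $[H_1:H_1\cap H_j]\ge p$, i.e. $|gH_1\cap H_j|\le|H_1|/p$. Summing over the $r-1$ subgroups that cover $gH_1$ gives $|H_1|=|gH_1|\le(r-1)\,|H_1|/p$, whence $r\ge p+1$, contradicting $r\le p$.

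For part~(ii), let $\Pi=\{H_1,\dots,H_{p+1}\}$ be a covering. It must be irredundant, for otherwise we could delete a member and retain a covering of size at most $p$, contradicting part~(i). Re-running the argument of part~(i) with $r=p+1$ (applying it with each member in turn playing the distinguished role) now forces every inequality there to be an equality: for each $i$, each $j\ne i$, and each coset $gH_i\ne H_i$ we must have $[H_i:H_i\cap H_j]=p$, and the sets $gH_i\cap H_j$ (over $j\ne i$) must partition $gH_i$. Counting $H_j\setminus H_i$ one $H_i$-coset at a time then yields $|H_j|=[G:H_i]\cdot(|H_i|/p)=|G|/p$, so every member of $\Pi$ has index exactly $p$ in $G$; in particular at least one does, which is the first assertion of~(ii).

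For the last assertion, fix any $H_i\in\Pi$. For $j\ne i$ the equality case above gives $[H_i:H_i\cap H_j]=p$, and we have just seen $[G:H_i]=p$, so $[G:H_i\cap H_j]=p^2$ divides $|G|$. The normality hypothesis in the statement gives a cleaner route to the index claim without invoking the full equality analysis: if $H_i\vartriangleleft G$ then $G/H_i$ is cyclic of order $p$, and the image of each $H_j$ ($j\ne i$) there is nontrivial—it cannot be contained in $H_i$, by irredundancy—hence $H_jH_i=G$ and $[H_j:H_i\cap H_j]=[G:H_i]=p$. The one real obstacle is locating the right object to count, namely a single nontrivial coset $gH_1$ rather than $G$ itself, together with the remark that $gH_1\cap H_j$ sits inside one coset of $H_1\cap H_j$; once that is in place, part~(i) is immediate, and part~(ii) is a careful but entirely routine treatment of the equality case.
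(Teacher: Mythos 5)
The paper does not actually prove this theorem --- it is quoted from Haber and Rosenfeld \cite{haber1959groups} without proof --- so there is no in-paper argument to compare against; I can only assess your proposal on its own terms, and it is correct and complete. The key device (for $g\notin H_1$, the coset $gH_1$ is covered by the remaining members, and $gH_1\cap H_j$ sits inside a single coset of $H_1\cap H_j$, so $|gH_1\cap H_j|\le|H_1\cap H_j|\le|H_1|/p$ once irredundancy forces $H_1\cap H_j<H_1$) is sound, and the squeeze in part~(ii) is carried out correctly. Two remarks. First, for part~(i) alone there is a shorter route the paper's sources use: every proper subgroup has index at least $p$, so $|G|\le 1+\sum_{i=1}^{k}(|H_i|-1)\le 1+k(|G|/p-1)<|G|$ for $k\le p$; your coset version costs a little more but is exactly what sets up the equality analysis in part~(ii), so the extra effort pays for itself. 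Second, your argument in fact proves more than the stated theorem: it shows that in \emph{any} covering by $p+1$ proper subgroups every member has index $p$ and $p^2$ divides $|G|$, with no normality hypothesis needed --- the normality clause in the statement is simply not required by your proof, which is fine (the stated result follows a fortiori). One small caution: the ``cleaner route'' you sketch at the end, using $H_i\vartriangleleft G$ to get $H_jH_i=G$, only yields $[H_j:H_i\cap H_j]=p$ and does not by itself pin down $[G:H_j]$ without a further count (e.g.\ summing $|H_j\setminus H_i|$ over the $p$ remaining members against $|G\setminus H_i|$); since you present it only as an aside and your main argument already carries the full conclusion, this is a loose remark rather than a gap.
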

As mentioned, Theorem 4 has been repeatedly mentioned in multiple papers and in M. Bruckheimer, et. al \cite{bruckheimer}, they had actually explored a little more of when groups can be the union of three proper subgroups. As an example, they had explained all dihedral groups of orders that are divisible by 4 and all dicyclic groups are `3-groups', which in the context of their paper means their covering number is 3. Additionally, they had shown if a group $G$ has the decomposition (or covering) of $\{A,B,C\}$ then this is only possible if all three subgroups are abelian, all are non-abelian, or only one is abelian. They had shown it was impossible for a covering of $G$ to have 2 abelian subgroups of $G$ and 1 non-abelian.\vspace{5pt}\\
\indent T. Foguel and M. Ragland \cite{foguel2008groups} actually investigate what they call `CIA'-groups, or groups that have a covering whose components are isomorphic abelian subgroups of $G$. They had found many results such as that every finite group can be a factor of a CIA-group, and that the (direct) product of two CIA-groups is a CIA-group. Among the other results they had derived, they had found which families of groups are CIA-groups and which ones do not. All dihedral groups and groups of square-free order are examples of non-CIA-groups and generally any non-cyclic group with prime exponent is a CIA-group. Since isomorphic groups have the same order, any finite CIA-group by definition will have an equal covering, or covering by proper subgroups of the same order.\vspace{5pt}\\
\indent J.H.E. Cohn \cite{cohn1994n} provide us with plenty of nifty theorems and corollaries. Before presenting two superb theorems from his paper we must mention that in place of\ $\bigcup$, Cohn used summation notation and so if $\{H_1, H_2, ..., H_n\}$ is a covering for $G$, with $|H_1| \geq |H_2| \geq ... |H_n|$, then he had written $G = \sum\limits_{i=1}^{n}H_i$. He had also used $i_r$ to denote $[G:H_r]$ and if $\sigma(G) = n$ he said that $G$ is an $n$-sum group.
\begin{theorem}[\cite{cohn1994n}]\label{cohn1}
Let $G$ be a finite $n$-sum group. It follows:
\begin{enumerate}
    \item $i_2 \leq n-1$
    \item if $N \vartriangleleft G$ then $\sigma(G) \leq \sigma(G/N)$
    \item $\sigma(H \times K) \leq \min\{\sigma(H), \sigma(K)\}$, where equality holds if and only if $|H|$ and $|K|$ are coprime.
\end{enumerate}
\end{theorem}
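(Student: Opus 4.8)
The plan is to treat the three statements in turn, since each relies on a different idea; the common thread is translating covering data back and forth through quotients and products.

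For part (1), I would start from an irredundant covering $\{H_1,\dots,H_n\}$ with $|H_1|\ge\cdots\ge|H_n|$, so $\sigma(G)=n$. Pick $x\in H_2$ with $x\notin\bigcup_{j\neq 2}H_j$ (possible by irredundancy). Now consider the coset $xH_1$: I claim its $|H_1|$ elements are distributed among the remaining subgroups in a controlled way. More precisely, for any $h\in H_1$, the element $xh$ lies in some $H_k$, and if $xh$ and $xh'$ lie in the same $H_k$ with $k\neq 1$, then $h(h')^{-1}\in H_1$ forces a relation. The cleanest route is: no element of $xH_1$ lies in $H_1$ (else $x\in H_1$), so the $i_2 = [G:H_1]$... wait — I should instead count via $H_1$ directly. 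Since $x\notin H_1$, the coset $xH_1$ is disjoint from $H_1$; each of its $|H_1|$ elements lies in one of $H_2,\dots,H_n$, and for fixed $k$, $xH_1\cap H_k$ has size at most $|H_1\cap H_k|\le |H_1|\cdot i_1^{-1}\cdot(\text{something})$. The honest statement Cohn uses is that $xH_1$ meets each $H_k$ ($k\ge 2$) in at most $|H_1|/i_2$ elements when indices are compared appropriately, giving $|H_1|\le (n-1)|H_1|/i_2$, hence $i_2\le n-1$. I would reconstruct the precise counting inequality carefully — this is the step I expect to be the main obstacle, since getting the right bound on $|xH_1\cap H_k|$ requires the irredundancy hypothesis and a clean coset argument rather than a crude estimate.

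For part (2), let $N\vartriangleleft G$ and suppose $\{\overline{H_1},\dots,\overline{H_m}\}$ is a covering of $G/N$ by proper subgroups with $m=\sigma(G/N)$. Pulling back along the canonical projection $\pi\colon G\to G/N$, set $H_i=\pi^{-1}(\overline{H_i})$. Each $H_i$ is a proper subgroup of $G$ (proper because $\overline{H_i}\neq G/N$), and since every $g\in G$ has $\pi(g)\in\overline{H_i}$ for some $i$, we get $g\in H_i$; thus $\{H_1,\dots,H_m\}$ covers $G$, so $\sigma(G)\le m=\sigma(G/N)$. This part is short and the only thing to check is properness of the pullbacks, which is immediate.

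For part (3), to show $\sigma(H\times K)\le\min\{\sigma(H),\sigma(K)\}$, note $H\times K$ has normal subgroups with quotients $H$ and $K$ respectively (namely $\{e\}\times K$ and $H\times\{e\}$), so the inequality is an instance of part (2) applied twice. For the equality characterization, I would argue both directions. If $\gcd(|H|,|K|)=1$: given a minimal covering of $H\times K$, I want to push it down to a covering of (say) $H$ of the same size; the coprimality lets me use the fact that any subgroup $L\le H\times K$ satisfies $L=(L\cap H)\times(L\cap K)$ by an order/Goursat argument, so proper subgroups of $H\times K$ project to subgroups at least one of whose coordinates is proper, and a counting/pigeonhole step recovers a covering of $H$ or $K$ with no more components — forcing $\sigma(H\times K)\ge\min$. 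Conversely, if $\gcd(|H|,|K|)=d>1$, I would exhibit a covering of $H\times K$ strictly smaller than $\min\{\sigma(H),\sigma(K)\}$: take a prime $p\mid d$, and build a covering using the "diagonal-type" subgroups coming from a common quotient of order $p$ of both $H$ and $K$ — concretely, $H\times K$ surjects onto $C_p\times C_p$, which has $\sigma=p+1$, and if $p+1<\min\{\sigma(H),\sigma(K)\}$ one pulls back the $(p+1)$-covering of $C_p\times C_p$ via part (2). The subtle case is when $p+1$ is not actually smaller than the minimum; there one must instead combine a small covering on the common-quotient part with the product structure directly, and verifying that this genuinely beats the minimum is the delicate point of part (3). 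I would handle the Goursat/coprime direction first since it is the most structural, then finish the non-coprime direction by the quotient construction.
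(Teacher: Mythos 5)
First, a point of orientation: the paper does not prove Theorem \ref{cohn1} at all --- it is imported from Cohn \cite{cohn1994n} without proof --- so your proposal has to be judged on its own terms. Part (2) is correct and complete: pulling back a minimal covering of $G/N$ along the projection gives proper subgroups (properness is exactly surjectivity of $\pi$ together with $\overline{H_i}\neq G/N$) whose union is $G$; this is the same correspondence-theorem mechanism the paper itself uses for Theorem \ref{Quotient}. Part (1), however, has the gap you feared, and your coset count cannot close it: bounding $|xH_1\cap H_k|$ by $|H_1\cap H_k|$ and summing over $k\geq 2$ gives $|H_1|\leq\sum_{k\geq 2}|H_1\cap H_k|\leq (n-1)|H_2|$, which only yields $i_2\leq (n-1)i_1$, strictly weaker than $i_2\leq n-1$. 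The fix is to count the complement of $H_1$ rather than a coset of it. Since $G\setminus H_1\subseteq\bigcup_{i\geq 2}(H_i\setminus H_1)$ and $[H_i:H_i\cap H_1]\leq[G:H_1]=i_1$, we get $|H_i\setminus H_1|=|H_i|-|H_i\cap H_1|\leq |H_i|\left(1-\tfrac{1}{i_1}\right)$, while $|G\setminus H_1|=|G|\left(1-\tfrac{1}{i_1}\right)$; the positive factor $1-\tfrac{1}{i_1}$ cancels from both sides, leaving $|G|\leq\sum_{i=2}^{n}|H_i|\leq (n-1)|H_2|$, i.e.\ $i_2\leq n-1$. (Note this argument does not even need irredundancy, only that $H_1$ is proper and largest.)

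For part (3), deducing the inequality from part (2) applied to the two coordinate quotients is exactly right, and your Goursat/pigeonhole plan for the coprime direction does work once written out: coprimality forces every subgroup $L\leq H\times K$ to split as $A\times B$, a proper $L$ has $A<H$ or $B<K$, and if neither the $A$'s with $A<H$ covered $H$ nor the $B$'s with $B<K$ covered $K$, a witness $(h,k)$ would lie in no component of the covering; hence $\min\{\sigma(H),\sigma(K)\}\leq\sigma(H\times K)$. But the ``only if'' direction you are straining to prove is not salvageable, because it is false as stated: take $H=K=\mathbb{Z}_2^2$, so $\gcd(|H|,|K|)=4$, yet $\sigma(H\times K)=\sigma(\mathbb{Z}_2^4)=3=\min\{\sigma(H),\sigma(K)\}$ since $\mathbb{Z}_2^4$ has a Klein quotient. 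Cohn proves only that coprimality is \emph{sufficient} for equality; the biconditional in the statement above is an over-claim of the source. Independently of this, your proposed construction would fail even where the conclusion happens to hold: a prime $p$ dividing $\gcd(|H|,|K|)$ need not give surjections $H\to\mathbb{Z}_p$ and $K\to\mathbb{Z}_p$ (nonabelian simple factors admit no such quotients), so the map onto $\mathbb{Z}_p\times\mathbb{Z}_p$ that you want to pull a covering back along need not exist.
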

Before we continue, we must mention that Theorem \ref{cohn2} was originally written so that \textit{1.} and \textit{2.} were lemmas and \textit{3.} was an immediate corollary. In our study of equal coverings, any one of these may prove to be useful so we compiled all three statements into a theorem. Before we move on to the next theorem, we must note that Cohn defined a primitive $n$-sum group $G$ to be a group such that $\sigma(G) = n$ and $\sigma(G/N) > n$ for all nontrivial normal subgroups $N$ of $G$. The following theorem was written by \cite{bhargava2009groups} with \textit{2.}-\textit{4.} coming originally from Theorem 5 of \cite{cohn1994n} and \textit{5.} coming from work developed later on in the same paper.
\begin{theorem}[\cite{cohn1994n}, \cite{tomkinson}]\label{cohn2}
\vspace{5pt}
\begin{enumerate}
    \item There are no 2-sum groups.
    \item $G$ is a 3-sum group if and only if it has at least two subgroups of index 2. The only primitive 2-sum group is $V$.
    \item $G$ is a 4-sum group if and only if $\sigma(G) \neq 3$ and it has at least 3 subgroups of index 3. The only primitive 4-sum groups are $\mathbb{Z}_3^2$ and $S_3$.
    \item $G$ is a 5-sum group if and only if $\sigma(G) \neq 3$ or 4 and it has at least one maximal subgroup of index 4. The only primitive 5-sum group is $A_4$.
    \item $G$ is a 6-sum group if and only if $\sigma(G) \neq 3$, 4, or 5 and there is a quotient isomorphic to $\mathbb{Z}_5^2$, $D_{10}$ (dihedral group of order 10) or $W = \mathbb{Z}_5 \rtimes \mathbb{Z}_4 = \langle a,b|\ a^5 = b^4 = e, ba = a^2b\rangle$. All three happen to be the only primitive 6-sum groups.
    \item There are no 7-sum groups, or no $G$ for which $\sigma(G) = 7$.
\end{enumerate}
\end{theorem}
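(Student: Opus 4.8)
The plan is to dispatch the six claims in order of increasing difficulty, leaning on two facts already in hand: claim~1 ($\sigma(G)\neq 2$) is precisely Theorem \ref{Union2}, and the monotonicity $\sigma(G/N)\leq\sigma(G)$ from Theorem \ref{cohn1}(2) means that if $\sigma(G)=n$ we may replace $G$ by a quotient $\bar G$ that is a primitive $n$-sum group. So for claims~2--6 it suffices to (a) classify the primitive $n$-sum groups for $n\le 6$, and (b) check both that the stated ``$G$ has $\geq k$ subgroups of index $d$'' conditions are forced on any $n$-sum group and that, conversely, such a configuration produces a covering of size $\le n$. The common engine for (a) is this: in an irredundant covering $\{H_1,\dots,H_n\}$ ordered so that $i_1=[G:H_1]\le\cdots\le i_n$, Theorem \ref{cohn1}(1) gives $i_2\le n-1$, and a short inclusion--exclusion estimate on how much of $G$ a family of subgroups of bounded index can cover pins down $i_1$; when the largest member can be taken core-free (which primitivity arranges), $\bar G$ acquires a faithful transitive action of degree $i_1$, hence embeds in $S_{i_1}$, and one enumerates.

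Carrying this out: for $n=3$ the estimate forces every $H_i$ to have index $2$, so $\bar G$ is a group of order $4$ with at least two subgroups of index $2$, i.e.\ $\bar G\cong V$; conversely two index-$2$ subgroups of $G$ meet in a normal subgroup of index $4$ with quotient $V$, whose three subgroups lift to a $3$-covering of $G$ --- this is claim~2. For $n=4$ the estimate forces at least three members of index $3$, so $\bar G$ (the quotient by the core of their intersection) is covered by three subgroups of index $3$; enumeration gives $\bar G\cong\mathbb{Z}_3^2$ or $S_3$, and conversely such a quotient yields a $4$-covering --- claim~3. For $n=5$, once the $n=3,4$ cases are excluded the covering must contain a maximal subgroup of index $4$, so a primitive $5$-sum group embeds in $S_4$; scanning the subgroups of $S_4$ leaves $A_4$, with $\sigma(A_4)=5$ --- claim~4. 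For $n=6$ the same method produces a faithful action of degree $5$ (the only new possibility, $5$ being prime), so $\bar G$ embeds in $S_5$; the transitive degree-$5$ groups surviving the covering-number test are exactly $\mathbb{Z}_5^2$, $D_{10}$, and $W=\mathbb{Z}_5\rtimes\mathbb{Z}_4$, each with $\sigma=6$ by direct check --- claim~5.

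For claim~6, suppose $\sigma(G)=7$ and pass to a primitive $7$-sum group; fix an irredundant $7$-covering with $i_1\le i_2\le\cdots$. Then $2\le i_1$ and, by Theorem \ref{cohn1}(1), $i_2\le 6$, so $i_1\in\{2,3,4,5,6\}$. The values $i_1\le 4$ are absorbed by claims~2--5: a covering whose largest member has index $2$, $3$, or $4$ forces a $V$-, $\mathbb{Z}_3^2$/$S_3$-, or $A_4$-type quotient and hence $\sigma(G)\in\{3,4,5\}$, a contradiction. For $i_1=5$ or $6$ the group $G$ has a faithful primitive action of degree $5$ or $6$, so it is one of the finitely many primitive groups of that degree. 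If $G$ is solvable we invoke the structure fact (provable by the same coset bootstrap) that $\sigma$ of a solvable group equals $p^a+1$ for a prime power $p^a$; since $7-1=6$ is not a prime power, $G$ must be non-solvable, hence contains a copy of $A_5$ and is one of $A_5\cong\mathrm{PSL}_2(5)$, $S_5\cong\mathrm{PGL}_2(5)$, $A_6$, $S_6$; for each one computes $\sigma$ directly (e.g.\ $\sigma(A_5)=10$, $\sigma(S_5)=16$, and $\sigma(A_6),\sigma(S_6)$ larger still) to see that none equals $7$. Every case gives a contradiction, so no $7$-sum group exists.

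I expect the main obstacle to be the borderline cases $i_1=5,6$ of claim~6, and within them two points. First, making the inclusion--exclusion estimate sharp enough to show that seven subgroups with the largest of index $6$ genuinely cannot cover $G$ unless $G$ has one of the very restricted structures above: this sharp bound on overlaps of maximal subgroups is the technical heart of Tomkinson's argument. Second, being certain that none of the non-solvable primitive groups of degree $5$ or $6$ --- especially $\mathrm{PSL}_2(5)\cong A_5$, $\mathrm{PGL}_2(5)\cong S_5$, $A_6$, $S_6$ --- has covering number exactly $7$; this is safest confirmed either from the known tables of $\sigma$-values for small almost-simple groups or by a short \texttt{GAP} computation. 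By contrast the solvable half of claim~6 and all of claims~1--5 are comparatively routine once the prime-power structure theorem for $\sigma$ of solvable groups is available.
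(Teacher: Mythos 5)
First, a point of comparison that matters here: the paper does not prove this statement at all. Theorem \ref{cohn2} sits in the history section as a quotation of results from Cohn \cite{cohn1994n} and Tomkinson \cite{tomkinson}, so there is no in-paper argument to measure your proposal against; what you have written is a reconstruction of the literature's proofs. As such a reconstruction, your overall architecture is faithful to what Cohn actually does: use $\sigma(G)\leq\sigma(G/N)$ (Theorem \ref{cohn1}(2)) to pass to a primitive $n$-sum quotient, use $i_2\leq n-1$ (Theorem \ref{cohn1}(1)) to bound the indices, and enumerate the small groups that survive. Part 1 is indeed just Theorem \ref{Union2}, and your honest flagging of part 6 as the genuinely hard case is accurate --- Cohn only conjectured it, and Tomkinson's proof of the nonexistence of $7$-sum groups (and of the $\sigma=p^a+1$ formula for solvable groups, which you invoke) is substantial work, not a routine ``coset bootstrap.''

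That said, there are concrete gaps in the sketch beyond the acknowledged one. In the $n=4$ step you slide from ``the covering contains at least three members of index $3$'' to ``$\bar G$ is covered by three subgroups of index $3$''; these are not the same, and $S_3$ itself is the counterexample: its three index-$3$ subgroups miss the $3$-cycles, and the minimal covering of $S_3$ needs the index-$2$ subgroup as a fourth member. In the $n=5$ and $n=6$ steps, the claim that primitivity (in Cohn's sense, $\sigma(G/N)>n$ for all nontrivial normal $N$) forces the relevant maximal subgroup to be core-free --- hence a faithful transitive action of degree $i_1$ --- does not follow immediately from the definition and needs an argument; and it is simply false for $\mathbb{Z}_5^2$, one of your own target groups, whose index-$5$ subgroups are all normal. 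Finally, for part 6 the reduction of the non-solvable case to the four groups $A_5$, $S_5$, $A_6$, $S_6$ presumes both the core-freeness just mentioned and that a minimal counterexample is almost simple, neither of which is established; the deferral to ``known tables or \texttt{GAP}'' for their covering numbers is reasonable in spirit but means the hardest sixth of the theorem is asserted rather than proved. None of this makes the strategy wrong --- it is essentially the right road map --- but as it stands the proposal proves parts 1--2 and gestures convincingly at the rest.
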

\noindent The last statement from Theorem \ref{cohn2}  is interesting since it is the third positive integer for which no groups can be covered by that number of proper subgroups, and although Cohn didn't know or demonstrate a proof of it, it was ultimately proven by M.J. Tomkinson \cite{tomkinson}. In M. Garonzi et. al.'s paper \cite{garonzi2019integers}, one topic of the paper was to figure out what are some integers that cannot be covering numbers. For a complete list of integers less than 129 that cannot be covering numbers, please see \cite{garonzi2019integers}. In particular, they had found that integers which can be covering numbers are of the form $\frac{q^m-1}{q-1}$, where $q$ is a prime and $m \neq 3$. Additionally, something Cohn had also conjectured, and was then proven by Tomkinson, was that for every prime number $p$ and positive integer $n$ there exists a group $G$ for which $\sigma(G) = p^n + 1$, and moreover, such groups are non-cyclic solvable groups.\vspace{5pt}\\
\indent In addition to determining what integers smaller than 129 cannot be a covering number, \cite{garonzi2019integers} also attempted to look at covering numbers of small symmetric groups, linear groups, and some sporadic groups. Some of the results were based on the work of A. Maroti \cite{maroti2005covering}, with one result being that that for all odd $n \geq 3$, except $n =9$, $\sigma(S_n) = 2^{n-1}$. \cite{kappe2016covering} had actually demonstrated that $\sigma(S_9) = 256$, so that formula actually holds for all odd integers greater than 1. Additionally, when finding the exact covering number of a group wasn't available they would at find a lower bound, upper bound or possibly both, such as for Janko group $J_1$, they had found that $5316 \leq \sigma(J_1) \leq 5413$.
\subsection*{Other Types of Coverings}
Now, we have primarily talked thus far groups that have a covering by general proper subgroups. One may ask what if we place restrictions or modify the concept of a standard covering of a group with say a covering by proper normal subgroups, or a covering by proper subgroups with the restriction that any two given subgroups intersect trivially?
\subsubsection*{Covering by Cosets}
 Neumann \cite{neumann1954groups} was interested in seeing what we can find out about when groups can be the union of cosets of subgroups. In other words, he was interested in when $G = \bigcup x_iH_i$. A powerful theorem he had proven was that:
\begin{theorem}[\cite{neumann1954groups}]
If $G = \bigcup x_iH_i$ is a union of cosets of subgroups, and if we remove any $x_iH_i$ for which $[G:H_i]$ is infinite then the remaining union is still all of $G$.
\end{theorem}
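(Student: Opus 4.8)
The plan is to reduce the theorem to a single self-contained lemma and prove that lemma by induction. The lemma is: \emph{if a group $G'$ is a union of finitely many cosets $g_1K_1,\dots,g_mK_m$ of subgroups, then at least one index $[G':K_j]$ is finite} (equivalently, no group is covered by finitely many cosets of infinite-index subgroups). Granting this, the theorem follows quickly. Write $G=\bigcup_{i=1}^n x_iH_i$, put $F=\{i:[G:H_i]<\infty\}$, and suppose for contradiction that $U:=\bigcup_{i\in F}x_iH_i\neq G$. Let $D=\bigcap_{i\in F}H_i$; being a finite intersection of finite-index subgroups, $D$ has finite index in $G$. Since $D\le H_i$ for $i\in F$, each $x_iH_i$ with $i\in F$ is a union of left cosets of $D$, hence so is $U$, hence so is $V:=G\setminus U$. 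Choose a coset $cD\subseteq V$. Because $V\subseteq\bigcup_{i\notin F}x_iH_i$, translating by $c^{-1}$ exhibits $D$ as a union of finitely many cosets of the subgroups $D\cap H_i$, $i\notin F$. Each $D\cap H_i$ ($i\notin F$) has infinite index in $D$: if $[D:D\cap H_i]$ were finite then $D$ would meet only finitely many cosets of $H_i$, so $[G:H_i]\le[G:D]\cdot[D:D\cap H_i]$ would be finite, contradicting $i\notin F$. Thus $D$ is covered by finitely many cosets of infinite-index subgroups, contradicting the lemma; so $U=G$, as claimed.

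To prove the lemma I would induct on $r$, the number of \emph{distinct} subgroups among $K_1,\dots,K_m$ — crucially not on the number $m$ of cosets. If $r=1$, all cosets are cosets of one subgroup $K$, so $[G':K]\le m<\infty$. Suppose $r\ge 2$ and, for contradiction, that every $K_j$ has infinite index. Fix one subgroup $K$ that occurs, let $J=\{j:K_j=K\}$ (so $J\neq\{1,\dots,m\}$), and note that the finitely many cosets $\{g_jK:j\in J\}$ cannot cover $G'$ since $[G':K]=\infty$; pick a coset $gK$ disjoint from all of them. Then $gK$ is covered by the cosets $g_jK_j$ with $j\notin J$, and intersecting with $gK$ and translating by $g^{-1}$ gives a covering of the group $K$ by finitely many cosets of the subgroups $K\cap K_j$, $j\notin J$, which involve at most $r-1$ distinct subgroups. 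By the inductive hypothesis applied inside $K$, some $K\cap K_{*}$ with $K_{*}\neq K$ has finite index $d$ in $K$; hence $K$, and so each coset $g_jK$ ($j\in J$), lies in a union of $d$ cosets of $K_{*}$. Replacing each $g_jK$ ($j\in J$) in the original covering by the corresponding $d$ cosets of $K_{*}$ produces a covering of $G'$ that no longer uses $K$ but still uses $K_{*}$, hence uses only $r-1$ distinct subgroups; by the inductive hypothesis again, one of its subgroups has finite index in $G'$ — but all of them are among the $K_j$ with $j\notin J$, of infinite index by assumption. This contradiction proves the lemma.

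I expect the real obstacle to be exactly the two points just flagged: picking the induction parameter and making the "absorption" step work. Induction on the number of cosets does not close, since trading the cosets of $K$ for cosets of $K_{*}$ can increase that number when $d>1$; it is only the count of distinct subgroups that must decrease. One must also see clearly why finite index of $K\cap K_{*}$ in $K$ is the correct hypothesis: via the bijection between cosets of $K\cap K_{*}$ in $K$ and the cosets of $K_{*}$ that meet $K$, it is equivalent to $K$ being contained in finitely many cosets of $K_{*}$, which is precisely what keeps the new covering finite. The remaining ingredients — translating a covering by a group element, intersecting a coset with a coset, and the bound $[G:A\cap B]\le[G:A][G:B]$ — are routine. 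Finally, it is worth recording that finiteness of the covering is used essentially (it is what makes "only finitely many cosets of $K$ occur although $K$ has infinitely many" bite), consistent with the failure of the statement for infinite families.
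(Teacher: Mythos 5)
Your proof is correct, but there is nothing in the paper to compare it against: the thesis states this result without proof, citing Neumann's 1954 paper, and immediately moves on. What you have reconstructed is essentially Neumann's own argument. The reduction to the lemma (no group is a finite union of cosets of infinite-index subgroups) via the finite-index intersection $D=\bigcap_{i\in F}H_i$ is sound, and the lemma's proof by induction on the number of \emph{distinct} subgroups — discarding a coset $gK$ missed by the cosets of $K$, covering $K$ by cosets of the intersections $K\cap K_j$, and then absorbing all cosets of $K$ into finitely many cosets of some $K_*$ with $[K:K\cap K_*]<\infty$ — is exactly the classical mechanism, including the correct observation that inducting on the number of cosets would not close. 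The only points worth tightening are cosmetic: note explicitly that $F\neq\emptyset$ (or that the empty intersection $D=G$ still has finite index) before invoking Poincar\'e's theorem, and that $gK\cap g_jK_j$ is empty or a single coset of $K\cap K_j$, which you implicitly use when translating by $g^{-1}$. Neither affects correctness.
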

\noindent If $G$ is a finite group the Theorem 8 will hold no matter which nontrivial subgroups $H_i$ we choose, but if we were dealing with infinite groups then this theorem can very well prove to incredibly useful.
\subsubsection*{Covering by Normal Subgroups and Conjugates of Subgroups}
M. Bhargava \cite{bhargava2009groups} investigated coverings by normal subgroups and conjugates of subgroups. One type of covering was that of covering by normal subgroups. It was proven that any group that is can be covered by three proper subgroups is actually covered by three normal proper subgroups. Additionally, $G$ can be written as the union of proper normal subgroups of $G$ if and only if there is some quotient group isomorphic to $\mathbb{Z}_{p}^2 = \mathbb{Z}_p \times \mathbb{Z}_p$ for some prime $p$.\\
Another type of covering is that of by conjugate subgroups. It turns out that there isn't an example of a finite group  that is coverable by the conjugates of a single proper subgroup! In \cite{bhargava2009groups} there happens to be a theorem in regard to non-cyclic solvable groups.
\begin{theorem}[\cite{bhargava2009groups}]
Suppose $G$ is a finite non-cyclic solvable group. Then $G$ satisfies either 1) a union of proper normal subgroups or 2) a union of conjugates of 2 proper subgroups. 
\end{theorem}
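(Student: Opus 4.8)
We show the disjunction in the equivalent form: \emph{if $G$ is not a union of its proper normal subgroups, then $G$ is a union of the conjugates of two proper subgroups}. By the criterion of \cite{bhargava2009groups} quoted above, the hypothesis says exactly that $G$ has no quotient isomorphic to $\mathbb{Z}_p^2$ for any prime $p$, and the plan is to carry this condition through the whole argument. (One quick consequence worth recording at the outset is that $G/G'$ is then cyclic, since every non-cyclic finite abelian group surjects onto some $\mathbb{Z}_p^2$.) I would argue by induction on $|G|$: pick a minimal normal subgroup $N$ of $G$, which is elementary abelian, $N \cong \mathbb{Z}_p^k$, by solvability, and note that $G/N$ is again solvable with no $\mathbb{Z}_q^2$-quotient.

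The easy case is $G/N$ non-cyclic. Then $G/N$ is a strictly smaller group of the same kind, so the inductive hypothesis applies; since $G/N$ has no $\mathbb{Z}_q^2$-quotient it is not a union of proper normal subgroups, hence it is a union of conjugates of two proper subgroups, say of $H/N$ and $L/N$ with $N \le H, L < G$. I would then lift: because $N$ is normal in $G$ and contained in $H$, every conjugate of $H/N$ in $G/N$ is $H^g/N$, so any $x$ whose image lies in a conjugate of $H/N$ already lies in the corresponding $H^g$, and likewise for $L$. Hence $G = \bigcup_g H^g \cup \bigcup_g L^g$ with $H, L$ proper.

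The substantial case — and what I expect to be the main obstacle — is $G/N$ cyclic. Here I would extract the structure first. Since $G$ is non-cyclic, $N \not\le Z(G)$ (else $G/Z(G)$, a quotient of $G/N$, is cyclic, $G$ is abelian, and so has a $\mathbb{Z}_q^2$-quotient) and $N \not\le \Phi(G)$ (else $G/\Phi(G)$ is cyclic, forcing $G$ cyclic). From $N \not\le \Phi(G)$ there is a maximal subgroup $H$ not containing $N$; then $NH = G$, and $N \cap H$, normalized by $H$ and centralized by $N$, is normal in $G$, so minimality gives $N \cap H = 1$ and $G = N \rtimes H$ with $H \cong G/N$ cyclic. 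Next I would study $C = C_G(N)$: it is normal, contains $N$, is proper (as $N \not\le Z(G)$), and is abelian since $N \le Z(C)$ while $C/N$ is cyclic. The quotient $G/C$ is cyclic and acts on $N$ faithfully (by the definition of $C$) and irreducibly (because $N$ is minimal normal); being abelian, the fixed space of any one of its elements is a submodule, hence $0$ or $N$, and equals $N$ only for the identity — so every nonidentity element of $G/C$, in particular every $h \in H \setminus C$ acting by conjugation, has no nonzero fixed vector in $N$, i.e.\ $w \mapsto h\cdot w - w$ is a bijection of $N$. Writing a general element of $G$ as $(v,h)$ with $v \in N$, $h \in H$, its $N$-conjugates are exactly the $(v + (h\cdot w - w),\, h)$ for $w \in N$, so when $h \notin C$ this family meets $H$. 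Thus an element $(v,h)$ lies in $C$ precisely when $h$ centralizes $N$, while the remaining elements, those with $h \notin C$, are conjugate into $H$; I would conclude $G = C \cup \bigcup_g H^g$, a covering by conjugates of the two proper subgroups $C$ and $H$.

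Finally I would note that the induction bottoms out correctly — the minimal examples, such as $S_3$, sit in the cyclic-quotient case, which invokes no inductive hypothesis — so the argument is complete. The delicate point throughout is this cyclic-quotient case: one must show that a non-cyclic solvable group with no $\mathbb{Z}_q^2$-quotient and with $G/N$ cyclic necessarily splits over $N$ with cyclic complement, and that the complement acts fixed-point-freely off $C_G(N)$; this is precisely what makes the explicit pair $\{C_G(N),\,H\}$ work, and it is exactly where the hypothesis is used — once via $N \not\le \Phi(G)$, and once via the irreducibility of the module $N$.
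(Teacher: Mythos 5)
The paper itself gives no proof of this statement --- it appears only as a citation of \cite{bhargava2009groups} in the history section --- so there is no internal argument to measure you against; judged on its own terms, your proof is correct. The lifting step is sound (every conjugate of $H/N$ in $G/N$ is $H^g/N$ with $N\le H^g$, so preimages of the covering of $G/N$ cover $G$), and the cyclic-quotient case works as you describe: $N\not\le\Phi(G)$ yields $G=N\rtimes H$ with $H$ maximal and cyclic, the fixed space of any element of the abelian group $G/C_G(N)$ is a $G$-submodule of the irreducible module $N$, hence $w\mapsto h\cdot w-w$ is bijective for $h\notin C_G(N)$, and so every $(v,h)$ with $h\notin C_G(N)$ is $N$-conjugate into $H$ while the rest lie in $C_G(N)$, giving $G=C_G(N)\cup\bigcup_g H^g$. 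One structural remark: in the non-cyclic case you invoke the \emph{harder} direction of the quoted criterion (no $\mathbb{Z}_q^2$-quotient implies not a union of proper normal subgroups) just to force the second disjunct for $G/N$. You can avoid that dependence by inducting directly on the statement ``every finite non-cyclic solvable group with no $\mathbb{Z}_p^2$-quotient is a union of conjugates of two proper subgroups''; then only the easy direction (a $\mathbb{Z}_p^2$-quotient pulls back to a covering by $p+1$ proper normal subgroups) is needed, once, at the initial reduction --- and for solvable $G$ that reduction is exactly your parenthetical observation that the hypothesis amounts to $G/G'$ being cyclic, a condition visibly inherited by every quotient.
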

\noindent Interestingly enough, the infinite group GL$_2(\mathbb{C})$, or group of all non-singular $2 \times 2$ matrices with complex entries, happens to be coverable by the set of all conjugates of upper triangular matrices \cite{bhargava2009groups}.
\subsubsection*{Partitions \& Semi-Partitions}
Now regardless of what type of group covering we have, we only require that such a collection is indeed a covering for the parent group. We now introduce a special kind of covering for groups.\vspace{5pt}\\
As mentioned prior, G.A. Miller \cite{miller1906groups} began an investigation into a special type of covering known as a partition and the purpose of this section is to highlight the many discoveries of partitionable groups.
\begin{definition}
Let $G$ be a group. If $\Pi$ is a covering of $G$ where any two distinct members of $\Pi$ intersect trivially, then $\Pi$ is a \textbf{partition} of $G$. We will say $G$ is partitionable if $G$ has a partition.
\end{definition}
\noindent First, \cite{miller1906groups} had shown two impressive statements: that any abelian partitionable group must be an elementary abelian $p$-group with order $\geq p^2$; and that if $|G| = p^m$ and $\Pi$ is a partition of $G$ then for any $H \in \Pi$ we have $|H| = p^a$ where $a$ divides $m$.\vspace{5pt}\\
Similar to how we defined the covering number of a group, we define $\rho(G)$ to be smallest number of members for any partition of $G$. If $G$ has no partition, then we write $\rho(G) = \infty$. Clearly when $G$ is partitionable, $\sigma(G) \leq \rho(G)$ and so a question may arise as to which groups may satisfy $\sigma(G) < \rho(G)$ and when $\sigma(G) = \rho(G)$. T. Foguel and N. Sizemore \cite{sizemorepartition} look at partition numbers of some finite solvable groups, such as $D_{2n}$ (the dihedral group of order $2n$) and $E_{p^n} = \mathbb{Z}_{p}^n$ (the elementary $p$-abelian group of order $p^n$, where $p$ is prime). In this paper, they mentioned and proven many results, such as when $n > 1$ we have $\rho(E_{p^n}) = 1 + p^{\lceil \frac{n}{2} \rceil}$, as well as that $\sigma(D_{2n}) = \rho(D_{2n})$ if and only if $n$ is prime, otherwise $\sigma(D_{2n}) < \rho(D_{2n})$. During the middle of the last century, work  has been do to classify all partitionable groups, and such a classification was finally complete in 1961 and is due to the work of R. Baer \cite{baer1961partitionen}, O. Kegel \cite{kegel1961nicht}, M. Suzuki \cite{suzuki1961finite} collectively. \vspace{5pt}\\
Let us familiarize ourselves with notation that will be used for the following theorem. If $G$ is a $p$-group, then we define $H_p(G) = \langle x \in G:\ x^p \neq 1\}$ and a group is of Hughes-Thompson type if $G$ is a non-$p$-group where $H_p(G) \neq G$. For the classification mentioned above, please see Theorem 10.
\begin{theorem}[\cite{baer1961partitionen}, \cite{kegel1961nicht}, \cite{suzuki1961finite}]
$G$ is a partitionable group if and only if $G$ is isomorphic to any of the following:
\begin{enumerate}
    \item $S_4$
    \item A $p$-group where $|G| > p$ and $H_p(G) < G$
    \item A Frobenius group ($G = H \rtimes K$, where $H$ is the Frobenius complement and $K$ is the Frobenius kernel)
    \item A group of Hughes-Thompson type
    \item $\text{PSL}(2, p^n)$, $p$ is prime and $p^n \geq 4$
    \item $\text{PGL}(2, p^n)$, $p$ is an odd prime and $p^n \geq 5$
    \item $\text{Sz}(q)$, the Suzuki group of order $q^2(q^2+1)/(q-1)$ where $q = 2^{2n+1}, n\geq 1$
\end{enumerate}
\end{theorem}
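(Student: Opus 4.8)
The plan is to handle the two directions separately. The forward direction --- every group on the list is partitionable --- is routine, and I would just exhibit an explicit partition family by family. For a Frobenius group the partition is the Frobenius kernel together with all conjugates of a Frobenius complement; the definition of a Frobenius group says exactly that distinct conjugates of the complement meet trivially and that the kernel is the set of elements lying in no such conjugate, so this is a partition. For a $p$-group $G$ with $H := H_p(G) < G$, every element outside $H$ has order $p$ (otherwise it would lie in $H_p(G)$ by definition), so $\{H\}\cup\{\langle x\rangle : x\in G\setminus H\}$ is a partition of $G$ into $H$ and subgroups of order $p$; for a group of Hughes--Thompson type the same recipe works after quoting the Hughes--Thompson theorem, which gives $[G:H_p(G)]=p$, hence $H_p(G)\trianglelefteq G$ with every element outside it of order $p$. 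For $\mathrm{PSL}(2,q)$, $\mathrm{PGL}(2,q)$ and $\mathrm{Sz}(q)$ I would invoke the classically known subgroup structure (Dickson's list for $\mathrm{PSL}(2,q)$ and $\mathrm{PGL}(2,q)$, Suzuki's analysis for $\mathrm{Sz}(q)$): the Sylow $p$-subgroups and the cyclic maximal tori are trivial-intersection subgroups, each nontrivial element lies in exactly one conjugate of one of them, and so the set of all these conjugates is a partition. Finally $S_4$ is checked by hand --- the six subgroups $\langle(i\,j)\rangle$, the three cyclic subgroups of order $4$, and the four Sylow $3$-subgroups are pairwise trivial-intersection and cover $S_4$.

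\noindent The converse --- no other group is partitionable --- is the deep half, and here I would follow Baer, Kegel and Suzuki. A partition of $G$ consists of pairwise trivial-intersection subgroups; analyzing this structure (and, in the decisive cases, its behaviour under conjugation, following Baer) yields a Frobenius-type local configuration, namely that the centralizer of each nontrivial $x$ is confined to the normalizer of the component containing $x$, while the normalizers of distinct components intersect trivially. This passes to suitable subgroups and quotients, making an induction possible. I would first dispose of the $p$-group case: a partition of a $p$-group cannot separate the $p$-th powers of elements of order greater than $p$, which forces $H_p(G)$ into a single component and hence $H_p(G)<G$, giving family (2). Then, for $G$ solvable but not a $p$-group, I would bring in Hall subgroups; together with the trivial-intersection structure, solvability pins $G$ down to a Frobenius group or a Hughes--Thompson group, with the one degenerate exception --- where the relevant section is too small for the local argument to apply --- being $S_4$.

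\noindent The hard part will be the non-solvable case, which is genuinely deep and cannot be made self-contained at the level of a thesis. After the reductions above, $G$ carries a Sylow subgroup that is a trivial-intersection set together with a proper subgroup behaving like a strongly embedded subgroup, and one must classify the finite (almost) simple groups with such local structure. This is the same circle of ideas from early-1960s local group theory --- Suzuki's work on groups with abelian or nilpotent involution centralizers and on groups with a trivial-intersection Sylow $2$-subgroup, and ultimately the strongly-embedded-subgroup machinery --- whose conclusion is that the only possibilities are $\mathrm{PSL}(2,q)$, its overgroup $\mathrm{PGL}(2,q)$, and the Suzuki groups $\mathrm{Sz}(q)$; indeed it was precisely this investigation in which Suzuki discovered the family $\mathrm{Sz}(q)$. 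Assembling the $p$-group case, the solvable non-$p$-group case (with the exception $S_4$), and this classification of the simple partitionable groups produces the seven families and nothing more. Since a complete proof of the non-solvable step is beyond the present scope, in what follows I shall take this classification as established, citing \cite{baer1961partitionen}, \cite{kegel1961nicht}, \cite{suzuki1961finite}.
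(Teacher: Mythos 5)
The paper does not prove this statement at all: it is quoted verbatim from the literature as the Baer--Kegel--Suzuki classification, with the three citations standing in for the entire argument. Your proposal therefore goes further than the paper does, and the part of it that can be checked is checked correctly. The explicit partitions you exhibit in the forward direction all work: the kernel plus the conjugates of the complement for a Frobenius group; $\{H_p(G)\}\cup\{\langle x\rangle : x\notin H_p(G)\}$ for the $p$-group and Hughes--Thompson families (since $x\notin H_p(G)$ forces $x^p=1$, and distinct subgroups of order $p$ meet trivially); the Sylow subgroups and maximal tori for $\mathrm{PSL}_2$, $\mathrm{PGL}_2$ and $\mathrm{Sz}(q)$; and your hand count for $S_4$ is right ($6\cdot 1+3\cdot 3+4\cdot 2=23$ nonidentity elements in pairwise trivially intersecting subgroups of orders $2$, $4$, $3$). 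For the converse you correctly identify that the solvable reduction and, above all, the classification of the nonsolvable partitionable groups (the strongly-embedded / TI-Sylow analysis that led Suzuki to the groups $\mathrm{Sz}(q)$) cannot be reproduced at this level and must be cited --- which is exactly the position the paper itself takes for the whole theorem. The only caveat worth recording is that your write-up is, by its own admission, a proof of one direction plus a citation for the other, so it should be presented as such rather than as a complete proof; as a self-contained justification of the easy half and an accurate map of where the hard half lives, it is sound.
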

After this work, G. Zappa \cite{zappa2003partitions}
had developed a more general concept of partitions, strict $S$-partitions.
\begin{definition}
If $G$ is a group and $\Pi$ is a partition of $G$ such that for all $H_i \cap H_j = S$ for all $H_i, H_j \in \Pi$ and for some $S < G$, then we say $\Pi$ is a \textbf{strict $S$-partition}. If, in addition, $|H_i| = |H_j|$ for all $H_i,H_j \in \Pi$ then we say $\Pi$ is an \textbf{equal strict $S$-partition} or an \textbf{$ES$-partition}.
\end{definition}
One powerful derivation of G. Zappa's was that if $N \leq S < G$  and $N \vartriangleleft G$ then $G$ has a strict $S$-partition $\{H_1, H_2, ..., H_n\}$ if and only if $\{H_1/N, H_2/N,..., H_n/N\}$ is a strict $S/N$-partition of $G/N$.\vspace{5pt}\\
Using Zappa's results and definitions, L. Taghvasani and M. Zarrin \cite{jafari2018criteria} proved among many results that a group $G$ is nilpotent if and only if for every subgroup $H$ of $G$, there is some $S \leq H$ such that $H$ has an $ES$-partition.\vspace{5pt}\\
In 1973, I.M. Isaacs \cite{isaacs1973equally} attempted to look at groups that were equally partitionable, or using Zappa's terminology, all $G$ that have $E\{1\}$-partition. He derived the following theorem:
\begin{theorem}[\cite{isaacs1973equally}]\label{isaacstheorem} $G$ is a finite group with equal partition if and only if $G$ is a finite non-cyclic $p$-group with exponent $p$ where $p$ is a prime.
\end{theorem}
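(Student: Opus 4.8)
The plan is to prove the two implications separately, using the Baer--Kegel--Suzuki classification of partitionable groups (stated above) as the structural input. The \emph{if} direction is short: if $G$ is a finite non-cyclic $p$-group of exponent $p$, take $\Pi$ to be the collection of all subgroups of order $p$. Every nonidentity element has order $p$, so $\Pi$ covers $G$; two distinct members have prime order, hence intersect trivially; and since $G$ is non-cyclic, $|G|\ge p^{2}$, so every member is proper. As all members have order $p$, $\Pi$ is an equal partition.

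For the \emph{only if} direction, suppose $\Pi$ is an equal partition of $G$ with every member of order $m$. First I would extract the arithmetic: the members partition $G\setminus\{1\}$, so $|G|=|\Pi|(m-1)+1$ and hence $m-1$ divides $|G|-1$; by Lagrange $m$ divides $|G|$; and since a subgroup is closed under taking powers, $\langle g\rangle$ lies inside a single member for each $g\in G$, so $\exp(G)$ divides $m$, while Cauchy's theorem forces every prime dividing $|G|$ to divide $\exp(G)$. Thus $|G|$, $m$, and $\exp(G)$ all have the same set of prime divisors, and it suffices to show this common set is a single prime $p$ with $\exp(G)=p$: then $G$ is a $p$-group, non-cyclic by Theorem~\ref{Cyclic} since a partition is a covering, which is exactly the asserted form, and the converse is immediate. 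Since a partition is a covering, $G$ is partitionable, so the classification applies and $G$ falls into one of its seven types.

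Next I would eliminate every type except the $p$-groups of exponent $p$, using the constraints above together with known subgroup structure. For $S_{4}$, $m-1\mid 23$ forces $m\in\{2,24\}$, and neither is a proper divisor of $24$ divisible by $\mathrm{rad}(24)=6$. For $\mathrm{PSL}(2,q)$, $\mathrm{PGL}(2,q)$, and $\mathrm{Sz}(q)$, the member order $m$ would have to be the order of a proper subgroup divisible by \emph{every} prime dividing $|G|$ --- in particular by the defining characteristic and by primes dividing $q-1$ and $q+1$ --- and Dickson's list of subgroups of $\mathrm{PSL}(2,q)$, with its analogues for $\mathrm{PGL}$ and the Suzuki groups, contains no such proper subgroup. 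For a Frobenius group $N\rtimes C$ and for a group of Hughes--Thompson type, $\exp(G)$ involves two distinct primes coming from coprime parts of the group, and the filter "$m\mid|G|$, $\mathrm{rad}(|G|)\mid m$, $m-1\mid|G|-1$, $m<|G|$, and $G$ has a subgroup of order $m$", applied to the short list of possible subgroup orders, leaves no candidate. This reduces us to the case that $G$ is a $p$-group with $H_{p}(G)<G$.

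For that remaining case I must upgrade $H_{p}(G)<G$ to $\exp(G)=p$, and I would argue by contradiction: if $\exp(G)\ge p^{2}$, choose $x$ of order $p^{2}$. The cyclic group $\langle x\rangle$ of order $p^{2}$ then lies inside a single member $H$, as does $\langle x^{p}\rangle$, which lies in no other member, so every subgroup meeting $\langle x\rangle$ nontrivially is tied to $H$; combining this with the coset decomposition of $G$ modulo $H$ --- each coset $Hg$ with $g\notin H$ meets $m$ pairwise distinct members, none equal to $H$ --- and with $|G|=|\Pi|(m-1)+1$ should produce a contradiction. I expect this to be the main obstacle: the classification only gives $H_{p}(G)<G$, which is strictly weaker than $\exp(G)=p$ (the dihedral group of order $8$ and the modular group of order $p^{3}$ are partitionable yet have no equal partition), so the argument must exploit the equal-order hypothesis in an essential, combinatorial way, forcing the overlaps produced by a cyclic subgroup of order $p^{2}$ to collide with the counting identity.
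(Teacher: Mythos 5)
First, a point of comparison: the paper does not prove this statement at all --- it is quoted from Isaacs's 1973 article --- so there is no internal argument to measure yours against, and your proposal must stand on its own. Its skeleton is fine: the \emph{if} direction is correct and complete, and the arithmetic bookkeeping ($|G| = |\Pi|(m-1)+1$, $\exp(G) \mid m \mid |G|$, every prime of $|G|$ dividing $\exp(G)$) is sound. But the hard direction is not actually proved, and the most serious gap is the one you flag yourself: for a $p$-group with $H_p(G) < G$ you must still show that an \emph{equal} partition forces exponent $p$, and you only say that combining the coset observation with the counting identity ``should produce a contradiction.'' That step is the substance of Isaacs's theorem; everything before it is preparation. (Your coset idea is a genuine start: for $j \neq i$ one has $|H_i g \cap H_j| \leq 1$, so each coset $H_i g$ with $g \notin H_i$ meets $m$ distinct components other than $H_i$, whence $|\Pi| \geq m+1$ and $|G| \geq m^2$; but this alone does not exclude exponent $p^2$, and you give no argument that closes the gap.)

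Second, the elimination of the non-$p$-group types is asserted rather than argued. For a general Frobenius group there is no ``short list of possible subgroup orders'' to filter, and the constraints $m \mid |G|$, $\mathrm{rad}(|G|) \mid m$, $(m-1) \mid (|G|-1)$, $m < |G|$ are not by themselves contradictory: if each component $L_i$ meets the kernel $K$ in a subgroup of order $k_i$, the counts $\sum_i (k_i - 1) = |K|-1$ and $\sum_i (m - k_i) = |G| - |K|$ reproduce $|G| = |\Pi|(m-1)+1$ exactly, so pure arithmetic closes nothing and a structural argument is required; the same applies to the Hughes--Thompson case. The $\mathrm{PSL}$, $\mathrm{PGL}$ and $\mathrm{Sz}$ cases do follow from Dickson's subgroup list, but that verification (including subfield subgroups) is real work you have skipped. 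A small factual slip: for odd $p$ the modular group of order $p^3$ satisfies $H_p(G) = G$ and is not partitionable, so it is not an example of a partitionable group lacking an equal partition. Finally, routing the proof through the Baer--Kegel--Suzuki classification is far heavier machinery than Isaacs's own elementary counting argument, and even granting that machinery, the residual work in the Frobenius and $p$-group cases is comparable to proving the theorem directly.
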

\noindent Isaac's result provides us an insight into at least one class of groups that have equal coverings, since an equal partition is an equal covering after all.\vspace{5pt}\\
\indent To close this subsection, we will talk briefly about \textit{semi-partitions} of groups, which are coverings of groups wherein the intersection of any three distinct components is trivial. Foguel et. al. \cite{semi-partitions} analyze and look for properties of groups that have or do not possess a semi-partition, as well as determine the semi-partition number of a group, $\rho_s(G)$. Some results they had found included that if $G$ has a semi-partition composed of proper normal subgroups, then $G$ is finite and solvable (\cite{semi-partitions}, Theorem 2.1) and when $p$ is prime we have $\sigma(D_{2p^n}) = p + 1$, $\rho(D_{2p^n}) = p^n + 1$, and $\rho_s(D_{2p^n}) = p^n - p^{n-1} + 2$ (\cite{semi-partitions}, Proposition 4.2).
\subsubsection*{Coverings of Loops}
This last subsection on the history of coverings of groups is dedicated to looking over coverings of loops. Indeed, the concept of coverings of groups can be loosely be translated to that of other algebraic structures such as loops, semigroups \cite{kappe2001analogue}, and rings \cite{bell1997analogue}. We will however focus on loops covered by subloops and even subgroups, as well as a brief mention of loop partitions.\vspace{5pt}\\
Similar to how we defined a group covering, T. Foguel and L.C. Kappe \cite{foguel2005loops} define a subloop covering of a loop $\mathscr{L}$ to be a collection of proper subloops $\mathscr{H}_1,..., \mathscr{H}_n$ whose set-theoretic union is $\mathscr{L}$. Using the terminology they had used, $\mathscr{L}$ is \textit{power-associative} if the subloop generated by $x$ forms a group for any $x \in \mathscr{L}$, and \textit{diassociative} if the subloop generated by  $x$ and $y$ form a group for any $x,y \in \mathscr{L}$.\\
Foguel and Kappe then defined the concept of an \textit{$n$-covering} for a loop. We say the collection of proper subloops $\{\mathscr{H}_i: i \in \Omega\}$ is an $n$-covering for  $\mathscr{L}$ if for any collection of $n$ elements of $\mathscr{L}$, those elements lie in $\mathscr{H}_i$ for some $i \in \Omega$. Using this definition, they had proven the following theorem.
\begin{theorem}[\cite{foguel2005loops}]
Given a loop $\mathscr{L}$ we have 
\begin{enumerate}
    \item $\mathscr{L}$ has a 1-covering (or just covering) if and only if $\mathscr{L}$ is power-associative
    \item $\mathscr{L}$ has a 2-covering if and only if $\mathscr{L}$ is diassociative
    \item $\mathscr{L}$ has a 3-covering if and only if $\mathscr{L}$ is a group
\end{enumerate}
\end{theorem}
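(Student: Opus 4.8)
The plan is to prove all three biconditionals from a single template: in one direction I would manufacture an explicit $n$-covering out of the subloops generated by $n$-element subsets, and in the other I would recover the relevant associativity law by tracking which member of a given $n$-covering an $n$-tuple of elements lands in. For the constructions: if $\mathscr{L}$ is power-associative, take $\Pi_1=\{\langle x\rangle:x\in\mathscr{L}\}$ --- by hypothesis each $\langle x\rangle$ is a group, hence an honest subloop, it is proper as long as $\mathscr{L}$ is not monogenic, and $x\in\langle x\rangle$ makes $\Pi_1$ a $1$-covering. If $\mathscr{L}$ is diassociative, take $\Pi_2=\{\langle x,y\rangle:x,y\in\mathscr{L}\}$: diassociativity makes each member a group, and any two elements $a,b$ lie together in $\langle a,b\rangle\in\Pi_2$. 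If $\mathscr{L}$ is a group, the family $\{\langle x,y,z\rangle:x,y,z\in\mathscr{L}\}$ (together with $\mathscr{L}$ itself if the whole loop is admitted as a trivial member) is a $3$-covering, since any three elements generate a subgroup containing them. Each of these is routine once the associativity hypothesis is in hand; the one thing to monitor is that the generated subloops are genuinely proper, which is the monogenic edge case.

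For the converse, suppose $\mathscr{L}$ carries an $n$-covering $\{\mathscr{H}_i\}$ and fix an arbitrary $n$-tuple of elements; by definition it lies in some member $\mathscr{H}_i$, so $\langle x\rangle\le\mathscr{H}_i$ when $n=1$, $\langle x,y\rangle\le\mathscr{H}_i$ when $n=2$, and $x,y,z$ all sit in $\mathscr{H}_i$ when $n=3$. If the covering members are read as subgroups --- the natural reading, since the whole point of the theorem is to pair $n$-generated subgroups with $n$-fold associativity --- the argument then closes at once: $\langle x\rangle$ and $\langle x,y\rangle$ are subgroups of a group, hence groups, which is power-associativity and diassociativity; and $(ab)c=a(bc)$ may be computed inside the group $\mathscr{H}_i$ for every triple $a,b,c$, forcing $\mathscr{L}$ to be associative, i.e. a group. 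If one only knows the members to be proper subloops, the same conclusions should still follow by induction on $|\mathscr{L}|$, since each $\mathscr{H}_i$ is a strictly smaller loop that one argues again carries a covering of the appropriate type --- for instance by passing to a maximal proper subloop containing the tuple and intersecting the remaining covering members with it.

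I expect this converse step to be the real obstacle, and its precise difficulty depends on how much one is allowed to assume of the covering members: if they are subgroups the theorem is quick, but if they are merely proper subloops then the work is in pushing the induction through --- checking that a proper subloop occurring in an $n$-covering of $\mathscr{L}$ itself inherits an $n$-covering by \emph{its} proper subloops --- and in disposing of the degenerate monogenic loops, which admit no covering by proper subloops at all and therefore have to be excluded or handled separately. The forward constructions, by contrast, are essentially bookkeeping.
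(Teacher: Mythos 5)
The thesis does not actually prove this statement --- it is quoted from \cite{foguel2005loops} as part of the literature survey --- so there is no in-paper argument to compare against; what follows is an assessment of your proposal on its own terms. Your main line (read the covering members as subgroups, build the covering from the $n$-generated subgroups in one direction, and in the other direction observe that any $n$ elements lie in a common subgroup, so that $\langle x\rangle$ and $\langle x,y\rangle$ are groups and $(ab)c=a(bc)$ can be computed inside an associative member) is exactly the standard argument and is correct; that is indeed how the result is proved in the source, where the covering is by subgroups and properness is not demanded.

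The genuine gap is in how you handle properness and in the fallback you offer for the ``merely proper subloops'' reading. First, properness is not just a ``monogenic edge case'' confined to part 1: under the definition as transcribed in this thesis (proper subloops), part 2 already fails for any $2$-generated group such as $S_3$, since a generating pair lies in no common proper subloop, and part 3 fails for every group generated by three or fewer elements; the correct statement simply drops properness, and your parenthetical ``together with $\mathscr{L}$ itself'' is incompatible with requiring the members to be proper rather than a fix for it. Second, the proposed induction on $|\mathscr{L}|$ for the case where the members are only known to be subloops cannot be pushed through, because under that reading the theorem is false: any non-monogenic loop, power-associative or not, is the union of its proper subloops $\langle x\rangle$, so a $1$-covering by proper subloops carries no associativity information whatsoever, and no amount of passing to smaller subloops and intersecting covering members will manufacture it. The resolution is not an induction but a commitment to the right definition: the $\mathscr{H}_i$ must be subgroups (associative subloops), and with that in place your one-line converse is the whole proof.
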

\noindent In the same paper, Foguel and Kappe that while a few ideas and properties of group coverings can be translated when talking about loops, in other instances we would need to place restrictions in in order to obtain results or theorems analogous to the theorems of group coverings. Theorem 6.4 of \cite{foguel2005loops} we would say is almost the loop equivalent of Theorem 8 of this paper, which was originally derived by B.H. Neumann.\vspace{5pt}\\
In a separate paper, T. Foguel and  R. Atanasov \cite{atanasov2014loops} go further with investigating the subject of loop partitions, which of course can be defined similar to how we define group partitions. First, a \textit{group covering} of loop $\mathscr{L}$ is a covering of subloops that also are subgroups. A group covering is a group-partition (or $G$-partition) if every nonidentity element lies in one subgroup of the covering, and is an equal group partition (or $EG$-partition) if such subgroups are of the same order. T. Foguel and R. Atanasov proved many results using these definitions with one being of being super interest for this paper:
\begin{theorem}[\cite{atanasov2014loops}]
If $\mathscr{L}$ is a finite non-cyclic power-associative loop with the propery $(ab)^n = a^nb^n$ for all $a,b \in \mathbb{N}$, then the following are equivalent:
\begin{enumerate}
    \item $\mathscr{L}$ has a proper $G$-partition
    \item $\mathscr{L}$ has a proper diassociative partition
    \item $\mathscr{L}$ has exponent $p$, where $p$ is prime
\end{enumerate}
\end{theorem}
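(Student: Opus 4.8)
The plan is to prove the cycle of implications $(3)\Rightarrow(1)\Rightarrow(2)\Rightarrow(3)$, with almost all the work concentrated in the last step. The implication $(1)\Rightarrow(2)$ needs no argument: a subgroup is in particular a diassociative subloop, so a proper $G$-partition is already a proper diassociative partition. For $(3)\Rightarrow(1)$ I would argue directly. Assuming $\mathscr{L}$ has prime exponent $p$, power-associativity makes $\langle x\rangle$ a finite cyclic group for every $x\neq 1$, and $x^p=1$ with $x\neq 1$ forces $|\langle x\rangle|=p$; thus each $\langle x\rangle$ is a subgroup, and it is proper since $\mathscr{L}$ is non-cyclic. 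If two such subgroups share a non-identity element $z$, then $\langle z\rangle$ equals both (all three have order $p$), so distinct members of $\Pi:=\{\langle x\rangle:x\in\mathscr{L}\setminus\{1\}\}$ meet trivially; since every element lies in some $\langle x\rangle$ and no single $\langle x\rangle$ exhausts the non-cyclic loop $\mathscr{L}$, the family $\Pi$ is a proper $G$-partition. (This incidentally re-proves $(3)\Rightarrow(2)$, which we anyway obtain for free.)

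The core is $(2)\Rightarrow(3)$. Let $\Pi=\{\mathscr{H}_1,\dots,\mathscr{H}_k\}$ be a proper diassociative partition, so each non-identity element of $\mathscr{L}$ lies in exactly one $\mathscr{H}_i$. I would first record the internal structure of the members: for $a,b$ in a common $\mathscr{H}_i$ the subloop $\langle a,b\rangle$ is a group (diassociativity), and applying $(ab)^n=a^nb^n$ with $n=2$ inside that group gives $ab=ba$, so every $\mathscr{H}_i$ is a commutative diassociative subloop and $\langle x\rangle$ is a cyclic group sitting inside the unique member containing $x$. (Note that if $\mathscr{L}$ itself were a group, $(2)\Rightarrow(3)$ would already follow from results quoted above — the $n=2$ identity makes the group abelian, and an abelian partitionable group is elementary abelian, hence of prime exponent — so the real content is the genuinely non-associative case.) Next I would show \emph{every non-identity element has prime order}: if $x$ had composite order, then $\langle x\rangle$ supplies a non-identity element of prime order together with a non-identity element generating a proper subgroup of $\langle x\rangle$, both inside the member of $x$, and — since $\Pi$ is proper — one may also pick a non-identity $w$ in a \emph{different} member; using that members are closed under division one checks that products $x^iw^j$ land in members distinct from those of $x$ and of $w$, while the identity $(x^iw^j)^n=x^{in}w^{jn}$ pins down the orders of these products tightly enough to keep producing new members, contradicting the finiteness of $\Pi$ (or, when the relevant orders are coprime, the trivial-intersection condition). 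Finally I would show \emph{the prime is the same throughout}: if $u$ has order $p$ and $v$ order $q$ with $p\neq q$, then $(uv)^n=u^nv^n$ forces $uv$ to have order $pq$, so the cyclic group $\langle uv\rangle$ of order $pq$ contains subgroups equal to $\langle u\rangle$ and to $\langle v\rangle$; this traps $u,v,uv$ in one member $\mathscr{H}_\ell$, and running this configuration against a third member together with the covering condition produces a contradiction. Together these give that every non-identity element has a single fixed prime order $p$, i.e.\ $\mathscr{L}$ has exponent $p$, closing the cycle.

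The \textbf{main obstacle} is $(2)\Rightarrow(3)$, and within it the structural tension at its heart: a partition's trivial-intersection condition constrains only elements lying in \emph{distinct} members, whereas diassociativity constrains only elements lying in a \emph{common} member, so the sole bridge between the two regimes is the power identity $(ab)^n=a^nb^n$. Because $\mathscr{L}$ is not assumed diassociative — let alone associative — none of the standard group-theoretic tools (conjugation, normal subgroups, quotients, the correspondence theorem) are available, and the order-counting arguments must be carried out using only power-associativity and the power identity. I expect the prime-order step to be the most technical point, the uniqueness-of-the-prime step to be comparatively brief once it is available, and the two easy directions to be essentially immediate.
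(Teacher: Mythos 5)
First, a point of comparison that matters: the paper contains no proof of this statement. It is quoted verbatim from \cite{atanasov2014loops} as background in the history section, so there is no in-paper argument to measure yours against; I can only assess the proposal on its own terms. Your architecture $(3)\Rightarrow(1)\Rightarrow(2)\Rightarrow(3)$ is the right one, and the two easy legs are complete and correct ($(3)\Rightarrow(1)$ in particular needs only power-associativity: each $\langle x\rangle$ is a subgroup of order $p$, properness comes from non-cyclicity, and trivial pairwise intersection is automatic for distinct order-$p$ subgroups). The commutativity of each member and the use of $(ab)^n=a^nb^n$ to control orders of cross-member products are also the right tools for $(2)\Rightarrow(3)$.

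The genuine gap is in your ``every non-identity element has prime order'' step, specifically in excluding elements of order $p^2$. The proposed mechanism --- that the products $x^iw^j$ ``keep producing new members, contradicting the finiteness of $\Pi$'' --- cannot produce a contradiction: there are only finitely many such products, so they witness only finitely many members, which at best bounds $|\Pi|$ from below; and your coprimality fallback is unavailable precisely here, since in the order-$p^2$ case all relevant orders are powers of the same prime. The repair is to reorder and sharpen. First run your uniqueness-of-the-prime argument: if $u$ has order $p$ and $v$ has order $q\neq p$, then $|uv|=pq$ and $\langle uv\rangle\supseteq\langle u\rangle,\langle v\rangle$ traps $u,v$ in the single member $H$ containing $uv$; any non-identity $w\notin H$ then either has order coprime to $p$ (so $w$ and $u$ share a member, which must be $H$, contradicting $w\notin H$) or has a power of order $p$ which shares a member with $v$, hence lies in $H$ as well as in the member of $w$, contradicting trivial intersection --- so $H=\mathscr{L}$, contradicting properness. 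This shows every order is a power of one fixed prime $p$. Now suppose $x$ in member $H$ has order $p^2$; pick a non-identity element of another member $K$ and raise it to a suitable power to get $w\in K$ of order exactly $p$. Closure of members under left and right division places $xw$ in a third member $M\notin\{H,K\}$, and then $(xw)^p=x^pw^p=x^p$ lies in $M$ (as a power of $xw$) and in $H$ (as a power of $x$), so $x^p\in H\cap M=\{1\}$, contradicting $|x|=p^2$. With that replacement for your finiteness argument, the outline closes.
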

\noindent Foguel and Atansov also demonstrate that for a certain type of finite non-cyclic loops they have an $EG$-partition if and only if they have prime exponent (\cite{atanasov2014loops} Theorem 6.7). 
\vspace{5pt}\\
\indent In this  section of this thesis, I attempted to highlight the important theorems and results of mathematicians who have delve into the subject of coverings of groups and coverings of other algebraic structures since the time of G.A. Miller near the beginning of the last century. A lot has been accomplished that a whole 20+ page thesis would be needed to cover more general results of the papers mentioned in this section and more. In the following section, we attempt derive some theorems of groups that have equal coverings. One thing to note that we may need to keep our eyes peeled for groups and loops of prime exponent since there have been at least two separate instances where such groups seem to correlate with being the union of equal order proper subgroups.
\section{Preliminaries for Equal Coverings}
Recall that if $G$ is a group, then an equal covering of $G$ is a collection of proper subgroups such that their union is $G$ and all such subgroups are of the same order. Again, since all cyclic groups already do not have a covering, we will focus on non-cyclic groups for the remainder of this paper. So, unless otherwise specified, in future theorems we will restrict ourselves to finite non-cyclic groups. The first theorem of this section will be powerful, but first we must mention the concept of the exponent of a group.
\begin{definition}
If $G$ is a group, then the \textbf{exponent} of $G$ is the smallest positive integer $n$ for which $a^n = 1$. We will use $\exp(G)$ to denote the exponent of $G$.
\end{definition}
\begin{remark}
If $G$ is a finite group, then the exponent of $G$ is the least common multiple of all the orders of the elements of $G$. 
\end{remark}
\begin{theorem}\label{ExpTheorem}
If $G$ has an equal covering $\Pi = \{H_i\}$, then $\exp(G)$ divides $|H_i|$ for all $H_i \in \Pi$.
\end{theorem}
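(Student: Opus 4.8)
The plan is to reduce everything to Lagrange's theorem together with the characterization of the exponent given in the Remark. Let $m$ denote the common order of the members of $\Pi$; that is, $|H_i| = m$ for every $H_i \in \Pi$, which makes sense precisely because $\Pi$ is an \emph{equal} covering. Since each $H_i$ is a subgroup of the finite group $G$, Lagrange's theorem gives $m \mid |G|$, but more importantly it controls the orders of individual elements, which is what we will exploit.

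First I would fix an arbitrary $g \in G$. Because $\Pi$ is a covering of $G$, there exists some index $i$ with $g \in H_i$. Then $\langle g \rangle \leq H_i$, so by Lagrange's theorem the order of $g$ divides $|H_i| = m$. Since $g$ was arbitrary, $m$ is a common multiple of the orders of all elements of $G$.

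Next I would invoke the Remark: for a finite group, $\exp(G)$ equals the least common multiple of the orders of the elements of $G$. Being the \emph{least} such common multiple, $\exp(G)$ divides every common multiple of those orders, and in particular $\exp(G) \mid m$. Finally, since $m = |H_i|$ for each $H_i \in \Pi$, we conclude that $\exp(G) \mid |H_i|$ for all $H_i \in \Pi$, as claimed.

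There is no real obstacle here; the only points requiring care are that the argument genuinely uses the \emph{equal} hypothesis (so that a single integer $m$ works for all members simultaneously) and that finiteness of $G$ is needed to identify $\exp(G)$ with the least common multiple of element orders via the Remark. The result is essentially a one-line consequence of Lagrange's theorem once these are in place.
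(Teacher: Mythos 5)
Your proposal is correct and follows essentially the same route as the paper's own proof: every element lies in some member of the covering, Lagrange's theorem makes its order divide the common order $m$, and the identification of $\exp(G)$ with the least common multiple of element orders (the Remark) gives $\exp(G)\mid m$. Your write-up is a bit more explicit about why the lcm divides every common multiple, but the argument is the same.
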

\begin{proof}
Let $\Pi = \{H_i\}$ be an equal covering of $G$ and suppose $x \in G$. Since $\Pi$ is a covering, $x \in H$ for some $H \in \Pi$. Since $|x|$ divides $|H|$, $|x|$ divides the order of $H_i$ for all $H_i \in \Pi$, since $\Pi$ is an equal covering. It follows then the order of every element of $G$ divides the order of every $H_i \in \Pi$, so $\exp(G)$ divides $|H_i|$ for all $H_i \in \Pi$.
\end{proof}
\begin{corollary}\label{ExpCor}
If $\exp(G) \nmid |K|$ for every maximal subgroup $K$ of $G$, then $G$ does not have an equal covering.
\end{corollary}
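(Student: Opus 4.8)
The plan is to prove the contrapositive: assuming $G$ has an equal covering, I will exhibit a maximal subgroup $K$ of $G$ for which $\exp(G)$ divides $|K|$, contradicting the hypothesis. So suppose $\Pi = \{H_i\}$ is an equal covering of $G$. By Proposition \ref{Bounds} the covering $\Pi$ is nonempty (indeed $\sigma(G) > 2$), so we may fix some member $H \in \Pi$, and by the definition of a covering $H$ is a proper subgroup of $G$. Since $G$ is finite, $H$ is contained in some maximal subgroup $K$ of $G$ — simply take a maximal element, with respect to inclusion, of the (finite, nonempty) set of proper subgroups of $G$ that contain $H$.

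Next I would apply Theorem \ref{ExpTheorem} to the equal covering $\Pi$, which gives $\exp(G) \mid |H|$. Because $H \leq K$, Lagrange's theorem yields $|H| \mid |K|$, and combining the two divisibilities gives $\exp(G) \mid |K|$. This produces a maximal subgroup $K$ of $G$ with $\exp(G) \mid |K|$, which is precisely the negation of the stated hypothesis. Hence, whenever $\exp(G) \nmid |K|$ for every maximal subgroup $K$ of $G$, the group $G$ can have no equal covering.

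There is no substantial obstacle here, as the statement is an immediate corollary of Theorem \ref{ExpTheorem}; the only point needing a moment's care is the passage from an arbitrary proper subgroup to a maximal subgroup containing it, which is guaranteed by the finiteness of $G$. (This is also where the result could break down for infinite groups, in keeping with the cautions raised earlier in the paper about which theorems survive the passage to the infinite setting.)
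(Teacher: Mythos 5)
Your proof is correct and is essentially the argument the paper intends: the corollary is left without a written proof precisely because it follows from Theorem \ref{ExpTheorem} by passing from a member $H$ of the equal covering to a maximal subgroup $K \supseteq H$ (which exists by finiteness) and applying Lagrange's theorem. Your added care about nonemptiness of the covering and the finiteness hypothesis is sound but introduces nothing beyond the paper's implicit route.
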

Now, recall $D_{2n}$ is our notation for the dihedral group of order $2n$. That is, let $D_{2n} = \langle r,s \rangle$, where the defining equations are $r^n = s^2 = 1$ and $srs = r^{-1}$. It turns out that there is a way to determine whether a dihedral group has an equal covering - and even more, we simply must examine the parity of $n$. As we will see, $D_{2n}$ will have an equal covering if and only if $n$ is even. 
\begin{lemma}\label{OrderDn}
In $D_{2n}$, if $i \in \{1,2,...,n\}$ then $|r^is| = |sr^i| = 2$ and $|r^i| = \lcm(n,i)/i$.
\end{lemma}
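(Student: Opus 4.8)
The plan is to compute the orders of the two types of non-identity elements of $D_{2n}$ directly from the defining relations $r^n = s^2 = 1$ and $srs = r^{-1}$.

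First I would handle the reflections, i.e. the elements of the form $r^i s$ and $s r^i$. Using $srs = r^{-1}$, one sees that $s r^i = r^{-i} s$, so in fact the two families $\{r^i s\}$ and $\{s r^i\}$ coincide as sets, and it suffices to treat $r^i s$. Then $(r^i s)^2 = r^i s r^i s = r^i (s r^i s) = r^i r^{-i} = 1$, so every such element has order dividing $2$; since $r^i s \neq 1$ (it lies outside the cyclic subgroup $\langle r\rangle$), its order is exactly $2$. This disposes of the claim $|r^i s| = |s r^i| = 2$.

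Next I would handle the rotations $r^i$. Inside the cyclic group $\langle r \rangle \cong \mathbb{Z}_n$, the order of $r^i$ is the standard number-theoretic fact $|r^i| = n/\gcd(n,i)$. To match the stated form I would rewrite this using the identity $\gcd(n,i)\cdot\lcm(n,i) = ni$, which gives $n/\gcd(n,i) = \lcm(n,i)/i$, as claimed. I would include a brief justification of $|r^i| = n/\gcd(n,i)$: the smallest positive $k$ with $r^{ik} = 1$ is the smallest $k$ with $n \mid ik$, which is $n/\gcd(n,i)$.

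I do not expect any genuine obstacle here; the lemma is essentially a bookkeeping statement about $D_{2n}$. The only point requiring a little care is making sure the indexing $i \in \{1,\dots,n\}$ (rather than $\{0,\dots,n-1\}$) causes no trouble — in particular when $i = n$ we have $r^n = 1$ with $\lcm(n,n)/n = 1$, consistent with the formula — and being explicit that $s r^i = r^{-i} s$ so that the ``$sr^i$'' case genuinely reduces to the ``$r^i s$'' case rather than needing a separate argument.
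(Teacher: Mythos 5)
Your proof is correct, and for the reflections it is essentially the same as the paper's: both arguments use $srs = r^{-1}$ to get $sr^is = r^{-i}$ and then square $r^is$ (your added remark that $r^is \neq 1$, so the order is exactly $2$ rather than $1$, is a detail the paper leaves implicit). For the rotations the two routes differ slightly, and yours is actually the more complete one: the paper only verifies that $(r^i)^{\lcm(i,n)/i} = r^{\lcm(i,n)} = 1$, which shows the order of $r^i$ \emph{divides} $\lcm(n,i)/i$ but not that it equals it, whereas you derive $|r^i| = n/\gcd(n,i)$ from the minimality of the exponent (the smallest $k$ with $n \mid ik$) and then convert via $\gcd(n,i)\cdot\lcm(n,i) = ni$. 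So your argument closes a small gap in the paper's own proof at the cost of one extra line of elementary number theory; otherwise the two proofs are the same bookkeeping computation.
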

\begin{proof}
Using the fact that $srs = r^{-1}$, we must have $(srs)^i = sr^is = r^{-i}$ using induction. Now, multiplying $r^i$ on both sides of $sr^is = r^{-i}$ will result in $(r^is)(r^is) = (sr^i)(sr^i) = 1$.\vspace{5pt}\\
We have $(r^i)^{\lcm(i,n)/i} = r^{\lcm(i,n)} = 1$, since $\lcm(i,n)$ is divisible by $n$, the order of $r$. 
\end{proof}
\begin{corollary}\label{ExpDn}
If $n$ is odd then $\exp(D_{2n}) = 2n$, if $n$ is even then $\exp(D_{2n}) = n$. In other words, $\exp(D_{2n}) = \lcm(n,2)$.
\end{corollary}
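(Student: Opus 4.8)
The plan is to read off the exponent directly from the element orders computed in Lemma \ref{OrderDn}, using Remark 2, which says that for a finite group the exponent equals the least common multiple of the orders of its elements. So I would begin by enumerating the $2n$ elements of $D_{2n}$: the rotations $r^i$ for $i = 0, 1, \dots, n-1$, and the reflections $r^i s$ for $i = 0, 1, \dots, n-1$.

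Next I would invoke Lemma \ref{OrderDn} to pin down these orders. Every reflection $r^i s$ has order $2$, contributing a factor of $2$ to the least common multiple (for $n \ge 2$, which is the only case of interest since $D_2$ is cyclic, though the final formula $\lcm(n,2)$ also holds trivially when $n=1$). Each rotation $r^i$ has order $\lcm(n,i)/i$; I would note that this quantity always divides $n$, since $\lcm(n,i)/i = n/\gcd(n,i)$, and that it attains the value $n$ when $i = 1$ (so $|r| = n$). Hence the least common multiple of the orders of all rotations is exactly $n$.

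Combining the two families, the least common multiple of the orders of all elements of $D_{2n}$ is $\lcm(n, 2)$. I would then finish by splitting into cases on the parity of $n$: if $n$ is odd then $\gcd(n,2) = 1$ so $\lcm(n,2) = 2n$, while if $n$ is even then $2 \mid n$ so $\lcm(n,2) = n$. This yields $\exp(D_{2n}) = \lcm(n,2)$, as claimed.

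There is no real obstacle here; the corollary is essentially immediate from Lemma \ref{OrderDn} and Remark 2. The only point requiring a line of justification is the elementary arithmetic fact that $\lcm(n,i)/i$ divides $n$ and equals $n$ for some valid choice of $i$, so that no rotation contributes an order exceeding $n$ and the rotation part of the exponent is precisely $n$.
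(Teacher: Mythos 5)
Your proposal is correct and follows essentially the same route as the paper: both read the exponent off as the least common multiple of the element orders supplied by Lemma \ref{OrderDn}, noting that reflections contribute a factor of $2$ and rotations contribute exactly $n$. If anything, your version is slightly tidier on the upper bound, since you observe directly that every rotation order $\lcm(n,i)/i = n/\gcd(n,i)$ divides $n$, whereas the paper bounds the exponent from above by appealing to divisors of $|D_{2n}|$.
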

\begin{proof}
By Lemma \ref{OrderDn}, we must have that $\exp(G)$ must be divisible by 2 and must divide $\lcm(i,n)$ for all $i \in \{1,2,...,n\}$. Observe when $i$ and $n$ are coprime, then $\lcm(i,n) = i\cdot n$, and so $|\langle r^i \rangle| = i\cdot n/i = n$. This suggests $\exp(D_{2n})$ must be divisible by $n$. If $n$ is odd, then the only possible value for $\exp(D_{2n})$ must be $2n$ since it will be the smallest multiple of $n$ and $2$ that also divides the order of the group. If $n$ is even, then $\exp(D_{2n}) = n$ since $n$ will be divisible by 2 and it is the largest proper divisor of $2n$. Therefore, $\exp(D_{2n}) = \lcm(n,2)$.
\end{proof}

\begin{theorem}\label{EqCovDn}
(i) 
If $n$ is odd, $D_{2n}$ has no equal covering. (ii) If $n$ is even, then $D_{2n}$ has an equal covering $\Pi = \{\langle r \rangle, \langle r^2, s\rangle, \langle r^2, rs\rangle\}$. Consequently, $\sigma(D_{2n}) = 3$ for even $n$.
\end{theorem}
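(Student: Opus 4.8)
The plan is to treat the two parts separately: part (i) is an immediate application of the exponent obstruction already developed, and part (ii) requires only an explicit verification together with the lower bound from Proposition \ref{Bounds}.

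For (i), suppose $n$ is odd. By Corollary \ref{ExpDn} we have $\exp(D_{2n}) = 2n = |D_{2n}|$. Any proper subgroup $K < D_{2n}$ has order a proper divisor of $2n$, so $|K| < 2n$ and hence $2n \nmid |K|$; in particular this holds for every maximal subgroup. Corollary \ref{ExpCor} then yields that $D_{2n}$ has no equal covering.

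For (ii), assume $n$ is even and set $\Pi = \{\langle r \rangle, \langle r^2, s\rangle, \langle r^2, rs\rangle\}$. First I would pin down the three orders. Since $n$ is even, $\gcd(2,n) = 2$, so $\langle r^2 \rangle$ has order $n/2$; as $s \notin \langle r \rangle$, the coset decompositions $\langle r^2, s\rangle = \langle r^2\rangle \cup \langle r^2\rangle s$ and $\langle r^2, rs\rangle = \langle r^2\rangle \cup \langle r^2\rangle (rs)$ show both of these subgroups have order $n$, and of course $|\langle r \rangle| = n$. Hence all three members of $\Pi$ have order $n = \tfrac12|D_{2n}|$, so each is proper (index $2$), and $\Pi$ is a genuine candidate for an equal covering. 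Next I would check coverage: every rotation $r^i$ lies in $\langle r \rangle$; a reflection $r^i s$ with $i$ even lies in $\langle r^2, s\rangle$, and with $i$ odd we write $r^i s = r^{i-1}(rs)$ with $i-1$ even, so it lies in $\langle r^2, rs\rangle$. Since every element of $D_{2n}$ is a rotation or a reflection, $\bigcup \Pi = D_{2n}$, so $\Pi$ is an equal covering and $\sigma(D_{2n}) \le 3$. Finally, $D_{2n}$ is non-cyclic (it has order $\ge 4$ and contains the two distinct involutions $s$ and $rs$), so Proposition \ref{Bounds} gives $\sigma(D_{2n}) > 2$; combining, $\sigma(D_{2n}) = 3$.

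These computations are routine; the one point that genuinely deserves care — and the only place the parity hypothesis enters — is the claim that $\langle r^2, s\rangle$ and $\langle r^2, rs\rangle$ are \emph{proper} subgroups. This fails for odd $n$: then $\gcd(2,n) = 1$, so $r^2$ generates all of $\langle r \rangle$ and $\langle r^2, s\rangle = \langle r, s\rangle = D_{2n}$. Thus evenness of $n$ is exactly what forces $[D_{2n} : \langle r^2 \rangle] = 2$ and makes the index-$2$ subgroups in $\Pi$ proper, and the write-up should flag this explicitly. A minor degenerate case worth a remark is $n = 2$, where $\langle r^2 \rangle$ is trivial but the three listed subgroups still have order $2$ and still cover $D_4 \cong V$, consistently with the general argument.
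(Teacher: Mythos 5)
Your proposal is correct and follows essentially the same route as the paper: part (i) via the exponent obstruction of Corollary \ref{ExpCor} combined with Corollary \ref{ExpDn}, and part (ii) by exhibiting the same three index-$2$ subgroups and checking coverage by the parity of the power of $r$ in each reflection. Your write-up is in fact slightly tighter in two spots the paper glosses over --- you compute $|\langle r^2,s\rangle|=n$ cleanly via the coset decomposition over $\langle r^2\rangle$, and you explicitly close the claim $\sigma(D_{2n})=3$ by citing the lower bound $\sigma(D_{2n})>2$ from Proposition \ref{Bounds}, which the paper's proof leaves implicit.
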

\begin{proof}
(i) Let $n$ be odd and suppose $D_{2n}$ had an equal covering. Then there would be some maximal subgroup whose order is divisible by $2n$, by Corollary \ref{ExpCor} and Corollary \ref{ExpDn}. Since $2n$ is the order of $D_{2n}$, we reach a contradiction.\vspace{5pt}\\
(ii) Let $n$ be even, $A = \langle r^2, s\rangle$ and $B = \langle r^2, rs\rangle$. We will first prove any $d \in D_{2n}$ lies in at least one of $\langle r \rangle$, $A$, or $B$. Then, we will show $|\langle r \rangle| = |A| =|B|$.\vspace{5pt}\\
For any $d \in D_{2n}$, we have the following three cases: $d$ is of the form $r^i$, $r^is$ when $i$ is even, or $r^is$ when $i$ is odd.\\
If $d = r^i$, then $d \in \langle r \rangle$.\\
If $d = r^is$, where $i$ is even, then $d = r^{2k}s$ for some $k$. Since $r^{2k}s = (r^2)^ks \in A$, $d \in A$.\\
If $d = r^is$, where $i$ is odd, then $d = r^{2k+1}s$, for some $k$. Since $r^{2k+1}s = (r^2)^k(rs) \in B$, $d \in B$. So, $\Pi$ is at least a covering of $D_{2n}$, which implies $\sigma(D_{2n}) = 3$.\vspace{5pt}\\
We know $|r| = n$, so we will now show $|A| = |B| = n$.\\
First, any element of $A$ is either an even power of $r$, or an even power of $r$ multiplied by $s$. Since $n$ is even, any power of $r^2$ will be even, and if we multiply any such elements with $s$, we simply obtain an even power of $r$ multiplied with $s$. Since $n$ is even, and $n$ is the order of $r$, the number of even powers of $r$ is $\frac{n}{2}$. Since we multiply such numbers by $s$, we obtain $\frac{n}{2}$ new elements. It follows $|A| = \frac{n}{2} + \frac{n}{2} = n$.\\
Now, any element of $B$ is either an even power of $r$, or an even power of $r$ multiplied by $rs$. We know the number of even powers of $r$ is $\frac{n}{2}$. Multiplying such numbers by $rs$, we obtain elements of the form $(r^{2k})(rs) = r^{2k+1}s$, which are odd powers of $r$ multiplied by $s$, so we obtain $\frac{n}{2}$ new elements. It follows $|B| = \frac{n}{2} + \frac{n}{2}= n$.\vspace{5pt}\\
Therefore, $\Pi = \{\langle r \rangle, \langle r^2, s\rangle, \langle r^2, rs\rangle\}$ is an equal covering of $D_{2n}$ when $n$ is even.
\end{proof}
While we are in search finding significant theorems about particular family of groups such as Theorem \ref{EqCovDn}, we have developed some other useful theorems. To start, as mentioned prior, Isaacs had proven that non-cyclic $p$-groups of exponent $p$ had an equal partition. Since equal partitions are a particular type of equal coverings, we will demonstrate that all non-cyclic $p$-groups, no matter their exponent, will have an equal covering. From here, we will develop some more useful theorems that can aid us in determining whether a group has an equal covering.
\begin{theorem}\label{pgroups}
Every non-cyclic finite $p$-group has an equal covering.
\end{theorem}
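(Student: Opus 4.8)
The plan is to show that the collection of \emph{all} maximal subgroups of $G$ is already an equal covering, so that in fact nothing needs to be constructed by hand. Two standard facts about finite $p$-groups drive the argument: first, every maximal subgroup of a finite $p$-group has index $p$ (hence they all have the common order $|G|/p$); second, every proper subgroup of a finite group is contained in some maximal subgroup.

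First I would let $\mathcal{M} = \{M_1, \dots, M_k\}$ denote the set of all maximal subgroups of $G$, each of which is a proper subgroup by definition. To see that $|M_j| = |G|/p$ for every $j$, recall that a finite $p$-group is nilpotent, so each maximal subgroup is normal; if $M \vartriangleleft G$ is maximal then $G/M$ has no nontrivial proper subgroups, hence is cyclic of prime order, and being a quotient of a $p$-group it has order exactly $p$. Thus $\mathcal{M}$ is a family of proper subgroups all of the same order.

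Next I would verify that $\mathcal{M}$ covers $G$. Given $x \in G$, non-cyclicity of $G$ forces $\langle x \rangle \neq G$, so $\langle x \rangle$ is a proper subgroup and is therefore contained in some maximal subgroup $M_j$; in particular $x \in M_j$. Hence $G = \bigcup_{j=1}^{k} M_j$. To confirm this is a genuine covering in the sense of the earlier definitions, note that since $G$ is non-cyclic the Frattini quotient $G/\Phi(G)$ is elementary abelian of rank $d \geq 2$, and the maximal subgroups of $G$ correspond to the hyperplanes of this $\mathbb{F}_p$-space, so $k = (p^d - 1)/(p-1) \geq p + 1 \geq 3$, consistent with Proposition~\ref{Bounds}. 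Therefore $\mathcal{M}$ is an equal covering of $G$.

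There is no real obstacle here; the only point requiring care is the justification that maximal subgroups of a $p$-group all have index exactly $p$, and this is precisely where the hypothesis that $G$ is a $p$-group (rather than an arbitrary non-cyclic group) is used — in general the maximal subgroups of a non-cyclic group need not share a common order. One could alternatively merge the order computation and the covering verification into a single step by pulling back the hyperplanes of $G/\Phi(G)$ directly, which simultaneously yields the count $k = (p^d-1)/(p-1)$.
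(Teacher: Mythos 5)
Your proof is correct and follows essentially the same route as the paper: cover $G$ by its maximal subgroups, all of which have index $p$ by nilpotency, and note that each element lies in one because $\langle x\rangle$ is proper. The extra Frattini-quotient count of maximal subgroups is a nice bonus but not needed; if anything, your version is slightly more careful than the paper's in making explicit that non-cyclicity is what guarantees $\langle x\rangle$ is contained in a maximal subgroup.
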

\begin{proof}
Let $G$ be a non-cyclic group of order $p^n$, where $p$ is prime and $n > 1$. If $x \in G$, then $x$ must lie in  some maximal subgroup of $G$, say $M_x$. Since $G$ is a $p$-group, and thus nilpotent,  $[G:M_x]$ is prime and consequently, $[G:M_x] = p$ by J.J. Rotman  (\cite{rotman2012introduction}, Theorem 4.6 (ii)). It follows for every $x \in G$, $x$ lies in a maximal subgroup $M_x$ such that $|M_x| = p^{n-1}$. It follows the collection of all such subgroups forms an equal covering of $G$. 
\end{proof}
\begin{theorem}\label{direct2}
If $G$ or $H$ have an equal covering, then $G \times H$ has an equal covering.
\end{theorem}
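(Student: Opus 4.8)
The plan is to take an equal covering of whichever factor has one and ``inflate'' it to the product by forming direct products with the entire other factor. Without loss of generality, suppose $G$ has an equal covering $\Pi = \{H_1, \dots, H_n\}$, so each $H_i$ is a proper subgroup of $G$ and $|H_1| = \cdots = |H_n| =: m$. I would then consider the collection $\Pi' = \{\, H_i \times H : 1 \le i \le n \,\}$ of subgroups of $G \times H$ and claim it is an equal covering of $G \times H$.

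There are three things to verify. First, each $H_i \times H$ is a subgroup of $G \times H$, being a direct product of subgroups. Second, each is proper: since $H_i \neq G$ there is some $g \in G$ with $g \notin H_i$, and then the pair $(g,1)$ does not lie in $H_i \times H$ (equivalently, in the finite setting, $|H_i \times H| = m\,|H| < |G|\,|H| = |G \times H|$). Third, $\Pi'$ covers $G \times H$: given any $(g,h) \in G \times H$, since $\Pi$ covers $G$ we have $g \in H_i$ for some $i$, hence $(g,h) \in H_i \times H$. Finally, every member of $\Pi'$ has order $m\,|H|$, so the covering is equal. This settles the case in which $G$ has the equal covering.

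For the remaining case, in which $H$ has an equal covering, I would either run the identical argument with the roles of the two factors interchanged — forming $\{\, G \times K_j \,\}$ from an equal covering $\{K_j\}$ of $H$ — or simply invoke the isomorphism $G \times H \cong H \times G$ to reduce to the case already handled.

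I do not anticipate any real obstacle; the argument is a direct construction. The only point needing a moment's care is the properness of the subgroups $H_i \times H$, which is precisely where the hypothesis that the $H_i$ are \emph{proper} in $G$ enters, together with the observation that the other factor is entirely unconstrained — it need not possess a covering of its own. It is also worth noting that this statement refines part~3 of Theorem~\ref{cohn1} in the equal-covering setting, in that the inflated covering $\Pi'$ has the same cardinality as the original covering of the factor.
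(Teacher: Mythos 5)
Your proposal is correct and uses exactly the same construction as the paper: inflate an equal covering $\{H_i\}$ of one factor to $\{H_i \times H\}$ and check covering and equality of orders. The only difference is that you explicitly verify properness of each $H_i \times H$, a point the paper's proof leaves implicit.
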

\begin{proof}
Without loss of generality, suppose $X = \{G_i: i \in I\}$ is an equal covering for $G$, and consider the set $\Pi = \{G_i \times H: i \in I\}$.\vspace{5pt}\\
Suppose $g \in G$ and $h \in H$. Since $X$ is a covering for $G$, $(g,h) \in G_i \times H$ for some $i \in I$. Since $X$ is an equal covering for $G$, if $i,j \in I$, then $|G_i| = |G_j|$, and consequently $|G_i \times H| = |G_i||H| = |G_j||H| = |G_j \times H|$. It follows every subgroup of $G \times H$ in $\Pi$ is of the same  order. Therefore, $\Pi$ is an equal covering of $G \times H$.
\end{proof}
\begin{corollary}\label{directarbitrary}
If $G = \prod\limits_{i}H_i$ is a direct product of groups, and at least one $H_i$ has an equal covering, then $G$ has an equal covering.
\end{corollary}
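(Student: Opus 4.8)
The plan is to deduce this directly from Theorem~\ref{direct2} by regrouping the direct product. Suppose $G = \prod_{i} H_i$ and that some particular factor, say $H_k$, has an equal covering. The first step is to recall that the direct product is, up to isomorphism, associative and commutative, so that we may split off the distinguished factor and write
\[
G \;\cong\; H_k \times \Bigl(\textstyle\prod_{i \neq k} H_i\Bigr).
\]
Set $H = \prod_{i \neq k} H_i$; then $G \cong H_k \times H$.

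The second step is to observe that the property of possessing an equal covering is an isomorphism invariant: if $\varphi\colon A \to B$ is an isomorphism and $\{A_j\}$ is an equal covering of $A$, then $\{\varphi(A_j)\}$ is an equal covering of $B$, since $\varphi$ sends proper subgroups to proper subgroups, preserves orders, and commutes with set-theoretic unions. Hence it suffices to show that $H_k \times H$ has an equal covering. The third and final step is to apply Theorem~\ref{direct2} to the pair $(H_k, H)$: since the hypothesis gives that $H_k$ has an equal covering, Theorem~\ref{direct2} yields that $H_k \times H$ does as well, and therefore so does $G$.

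I do not expect any genuine obstacle here — the content is entirely carried by Theorem~\ref{direct2}, and what remains is the bookkeeping of making the regrouping isomorphism explicit and checking that ``equal covering'' transfers along it. The only point worth flagging is that the regrouping step tacitly uses that the index set is finite, which is consistent with the paper's standing convention that all groups under consideration are finite; for an infinite product one would additionally need each factor to be finite (or at least proper) for the pieces $G_i \times H$ to remain proper subgroups, but that situation does not arise here.
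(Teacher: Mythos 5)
Your proposal is correct and matches the paper's intent exactly: the corollary is stated as an immediate consequence of Theorem~\ref{direct2}, obtained by splitting off the factor with an equal covering and grouping the rest into a single complementary factor. The extra care you take about isomorphism invariance and finiteness of the index set is sound but not a departure from the paper's (implicit) argument.
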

\noindent Corollary \ref{directarbitrary} is actually a powerful theorem since if we happen to know a certain group $G_1$ has an equal covering, then we easily construct a new group $G_2$ that has an equal covering by taking any direct product of finite groups with $G_1$, not needing to know anything about the remaining groups. 
\begin{theorem}\label{nilpotent}
If $G$ is a finite non-cyclic nilpotent group then it must have an equal covering.
\end{theorem}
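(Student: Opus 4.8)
The plan is to reduce this to the $p$-group case already handled in Theorem \ref{pgroups} by exploiting the structure theory of finite nilpotent groups together with the direct-product result in Corollary \ref{directarbitrary}. Recall the standard fact (see e.g.\ Rotman) that a finite nilpotent group $G$ is the internal direct product of its Sylow subgroups: if $|G| = p_1^{a_1} \cdots p_k^{a_k}$ with the $p_i$ distinct primes, then $G \cong P_1 \times \cdots \times P_k$ where $P_i$ is the (unique, normal) Sylow $p_i$-subgroup.

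First I would show that since $G$ is non-cyclic, at least one of the $P_i$ must be non-cyclic. This is the contrapositive of the following: if every $P_i$ is cyclic, then $G$ is cyclic. Indeed, if each $P_i = \langle x_i \rangle$ has order $p_i^{a_i}$, then the orders $|P_i|$ are pairwise coprime, so the element $(x_1, \dots, x_k) \in P_1 \times \cdots \times P_k$ has order $\lcm(p_1^{a_1}, \dots, p_k^{a_k}) = |G|$, whence $G$ is cyclic. So from $G$ non-cyclic we extract an index $j$ with $P_j$ non-cyclic.

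Next I would invoke Theorem \ref{pgroups}: the non-cyclic finite $p$-group $P_j$ has an equal covering. Finally, writing $G \cong P_1 \times \cdots \times P_k$ as a direct product of groups one of whose factors (namely $P_j$) has an equal covering, Corollary \ref{directarbitrary} immediately gives that $G$ has an equal covering, completing the argument.

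I do not anticipate a serious obstacle here; the only point requiring a moment's care is the claim that $G$ non-cyclic forces some Sylow subgroup to be non-cyclic, and that is just the coprime-orders/CRT computation sketched above. Everything else is a direct citation of Theorem \ref{pgroups} and Corollary \ref{directarbitrary}, so the write-up should be short.
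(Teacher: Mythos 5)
Your proof is correct and follows essentially the same route as the paper: decompose $G$ as the direct product of its Sylow subgroups and combine Theorem \ref{pgroups} with Corollary \ref{directarbitrary}. In fact your write-up is more careful than the paper's, which asserts that ``all $p$-groups have an equal covering'' (false for cyclic ones) and never checks that some Sylow factor is non-cyclic; your coprime-order argument supplies exactly that missing step.
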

\begin{proof}
Let $G$ be a finite non-cyclic nilpotent group. Since $G$ is a direct of its Sylow $p$-subgroups, by (\cite{rotman2012introduction}, Theorem 5.39), and all $p$-groups have an equal covering, so does $G$ by Theorem \ref{pgroups} and Corollary \ref{directarbitrary}. 
\end{proof}
\begin{theorem}\label{distinctp}
Suppose $G$ is a group whose order is the product of two or more distinct primes, each with a multiplicity of one. That is, $|G| = p_1p_2...p_n$, where $p_i =p_j$ only when $i=j$. Then $G$ cannot have an equal covering.
\end{theorem}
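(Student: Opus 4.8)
The plan is to reduce the statement to Corollary \ref{ExpCor} by showing that a finite group of squarefree order has exponent equal to its order. Let $G$ be a group with $|G| = p_1 p_2 \cdots p_n$, where the $p_i$ are pairwise distinct primes. By Cauchy's theorem, for each $i$ there is an element of $G$ of order $p_i$, so $p_i \mid \exp(G)$ for every $i$. Since the $p_i$ are distinct, $\lcm(p_1, \dots, p_n) = p_1 \cdots p_n = |G|$, and hence $|G| \mid \exp(G)$. Combined with the fact that $\exp(G)$ divides $|G|$ (by the Remark, $\exp(G)$ is the least common multiple of the element orders, each of which divides $|G|$ by Lagrange), we conclude $\exp(G) = |G|$.

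Next I would observe that any proper subgroup $K < G$ --- in particular any maximal subgroup --- satisfies $|K| \mid |G|$ with $|K| < |G|$, so $|K|$ is a proper divisor of $|G| = \exp(G)$; thus $\exp(G) \nmid |K|$. Since this holds for every maximal subgroup $K$ of $G$, Corollary \ref{ExpCor} (equivalently, the contrapositive of Theorem \ref{ExpTheorem} applied to a hypothetical equal covering, whose members are proper subgroups) gives that $G$ has no equal covering.

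There is no serious obstacle in this argument; the one point requiring care is that the distinctness of the primes is exactly what forces $\lcm(p_1,\dots,p_n)$ to equal $|G|$ rather than a proper divisor of it, and this is where the squarefree hypothesis is used in full. It is also worth noting that the result is sharp: if some prime divided $|G|$ with multiplicity at least two, the equality $\exp(G) = |G|$ can fail, and indeed Theorem \ref{nilpotent} already exhibits non-cyclic nilpotent groups (such as non-cyclic $p$-groups) of non-squarefree order that do possess equal coverings.
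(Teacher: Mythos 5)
Your proposal is correct and follows essentially the same route as the paper: both establish via Cauchy's theorem that $\exp(G) = p_1p_2\cdots p_n = |G|$, note that no proper (in particular no maximal) subgroup can have order divisible by $|G|$, and conclude by Corollary \ref{ExpCor}. Your version is in fact slightly more careful in justifying $\exp(G) = |G|$ by two-sided divisibility, but the underlying argument is identical.
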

\begin{proof}
Suppose $p_i$ is some prime divisor of $|G|$. It follows $G$ must have some element $x_i \in G$ such that $|x_i| = p_i$. Since the order every element of $G$ is either a prime $p_i$ or a product of prime divisors of $|G|$, we must have $\exp(G) = \text{lcm}\{|x|: x \in G\} = p_1 \cdot p_2 \cdot ... \cdot p_n = |G|$. By Corollary \ref{ExpCor}, since there does not exist a maximal subgroup $K$ such that $|G| \mid |K|$, $G$ does not have an equal covering.
\end{proof}
\begin{theorem}\label{Quotient}
If $H \vartriangleleft G$ and $G/H$ has an equal covering, then $G$ has an equal covering.
\end{theorem}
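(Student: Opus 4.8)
The plan is to lift an equal covering of the quotient $G/H$ back up to $G$ via the correspondence (lattice isomorphism) theorem. Suppose $\Pi = \{\overline{K_i} : i \in I\}$ is an equal covering of $G/H$, so each $\overline{K_i}$ is a proper subgroup of $G/H$, every coset $gH$ lies in some $\overline{K_i}$, and $|\overline{K_i}| = |\overline{K_j}|$ for all $i, j \in I$. For each $i$, let $K_i$ be the preimage of $\overline{K_i}$ under the canonical projection $\pi : G \to G/H$; equivalently, $K_i$ is the unique subgroup of $G$ with $H \leq K_i$ and $K_i/H = \overline{K_i}$.

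The key steps are then the following. First, each $K_i$ is a \emph{proper} subgroup of $G$: if $K_i = G$ then $\overline{K_i} = K_i/H = G/H$, contradicting that $\overline{K_i}$ is proper. Second, the collection $\{K_i : i \in I\}$ covers $G$: given $g \in G$, the coset $gH$ lies in some $\overline{K_i}$, whence $g \in \pi^{-1}(\overline{K_i}) = K_i$. Third, the covering is equal: since $H \leq K_i$ and $[K_i : H] = |\overline{K_i}|$ takes the same value, say $m$, for every $i$, we get $|K_i| = m\,|H|$ for all $i \in I$, so all the $K_i$ have equal order. Hence $\{K_i : i \in I\}$ is an equal covering of $G$.

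There is essentially no serious obstacle here: the whole argument is a routine application of the correspondence theorem, the only point needing a moment's care being the properness claim, namely that a \emph{proper} subgroup of the quotient pulls back to a proper subgroup of $G$ (this is precisely where we use that the members of the covering of $G/H$ are proper). I would also remark in passing that this result is the "equal covering" analogue of item 2 of Theorem \ref{cohn1}, the inequality $\sigma(G) \leq \sigma(G/N)$, though the present proof is even simpler, since we need only exhibit \emph{some} equal covering of $G$ rather than control its cardinality.
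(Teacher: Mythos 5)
Your proof is correct and follows essentially the same route as the paper: both pull the equal covering of $G/H$ back to $G$ via the correspondence theorem, check that the preimages cover $G$, and deduce equal orders from $|K_i| = |\overline{K_i}|\,|H|$. Your write-up is if anything slightly more careful, since you explicitly justify properness of the preimages, which the paper asserts without comment.
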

\begin{proof}
Suppose $G/H$ has the equal covering $\Pi = \{N_1, N_2, ..., N_k\}$. Since $N_i < G/H$ and $H \vartriangleleft G$, $N_i = N_i^*/H$ for some $N_i^* < G$ by Correspondence Theorem in (\cite{rotman2012introduction}, Theorem 2.28). It follows that if $g \in G$ and $gH \in N_i$, then $g \in N_i^*$. So, the set $\Gamma = \{N_1^*, N_2^*, ..., N_k^*\}$ is a covering of $G$. Finally, for any $N_i,N_j \in P_i$, we have $|N_i| = |N_j|$, implying $|N_i^*/H| = |N_j^*/H|$, or $|N_i^*| = |N_j^*|$. Thus, $\Gamma$ is an equal covering of $G$.
\end{proof}
    Theorem \ref{Quotient} can provide us an alternate proof to Theorem \ref{direct2}. If $G = H \times K$, with $H $ having an equal covering, then we have $G/K \cong H$ has an equal covering. By Theorem \ref{Quotient}, so does $G$. The advantage of the original proof for Theorem \ref{direct2} is that if we know what an equal covering of  $G$ is then we can then easily find an example of an equal covering for $G \times H$.
 In a moment, we will define the semidirect product of two groups, as it turns out a lot of groups can be written in terms of the semidirect product of two groups. 
\begin{definition}
Let $G$ be a group with subgroups $H$ and $K$. We say $G$ is the semidirect product of $H$ and $K$, if $H \vartriangleleft G$ and $Q \cong K$ is a complement of $H$, that is, $HQ = G$ and $H \cap Q = \{1\}$. If $G$ is the semidirect product of $H$ and $K$, we denote this by $G = H \rtimes K$.
\end{definition}
\begin{remark}\label{QuotientSemi}
Note that if $G = H \rtimes K$, then $K \cong G/H$.
\end{remark}

\begin{corollary}\label{covering-semi}
If $G = H \rtimes K$ and $K$ has an equal covering, then $G$ has an equal covering.
\end{corollary}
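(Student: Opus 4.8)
The plan is to reduce Corollary \ref{covering-semi} directly to Theorem \ref{Quotient} by way of Remark \ref{QuotientSemi}. The corollary concerns a group $G = H \rtimes K$; by the definition of semidirect product, $H \vartriangleleft G$, and by Remark \ref{QuotientSemi} the quotient $G/H$ is isomorphic to $K$. So everything hinges on transporting the hypothesis ``$K$ has an equal covering'' across this isomorphism to conclude ``$G/H$ has an equal covering,'' and then invoking Theorem \ref{Quotient}.

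First I would record that $H$ is a normal subgroup of $G$, which is immediate from the definition of the semidirect product. Next I would note that $K$ having an equal covering means there is a collection $\{K_1, \dots, K_m\}$ of proper subgroups of $K$, all of the same order, whose union is $K$. Since $G/H \cong K$, let $\varphi\colon K \to G/H$ be an isomorphism; then $\{\varphi(K_1), \dots, \varphi(K_m)\}$ is a collection of proper subgroups of $G/H$ (isomorphisms send proper subgroups to proper subgroups), all of the same order (isomorphisms preserve order), whose union is all of $G/H$ (since $\varphi$ is onto and the $K_i$ cover $K$). Hence $G/H$ has an equal covering. Finally, applying Theorem \ref{Quotient} with the normal subgroup $H$ yields that $G$ itself has an equal covering.

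I do not anticipate a genuine obstacle here; this is essentially a packaging result. The only point that requires a line of care is the observation that ``has an equal covering'' is an isomorphism-invariant property — but this is transparent, since a covering is a purely set-theoretic/subgroup-lattice notion and order is preserved under isomorphism. An alternative, slightly more self-contained route avoids naming $\varphi$ explicitly: one can observe that $G/H$ and $K$ are isomorphic and that Theorem \ref{Quotient} only needs the \emph{existence} of an equal covering of the quotient, so it suffices to know that isomorphic groups simultaneously do or do not admit equal coverings. Either phrasing gives a two- or three-sentence proof.
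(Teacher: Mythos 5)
Your proposal is correct and follows exactly the paper's own route: note $H \vartriangleleft G$ with $G/H \cong K$ by Remark \ref{QuotientSemi}, transport the equal covering across the isomorphism, and apply Theorem \ref{Quotient}. The only difference is that you spell out the (true and easy) isomorphism-invariance of having an equal covering, which the paper leaves implicit.
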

\begin{proof}
By Remark \ref{QuotientSemi}, $K \cong G/H$. By Theorem \ref{Quotient}, if $K$ has an equal covering then so does $G$.
\end{proof}
Let us recall that if $G$ is a group with a subgroup $H$ of index 2, then $H \vartriangleleft G$. If $|G| > 2$, then if $G$ possesses such a group then it cannot be simple. What follows is a proposition.
\begin{proposition}\label{simpleexp}
If $G$ is a finite non-cyclic simple group with $\exp(G) = |G|/2$, then $G$ does not have an equal covering.
\end{proposition}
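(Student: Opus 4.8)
The plan is to argue by contradiction using the exponent constraint from Theorem \ref{ExpTheorem} together with the fact that subgroups of index $2$ are normal. Suppose, for contradiction, that $G$ admits an equal covering $\Pi = \{H_i\}$. Each $H_i$ is a proper subgroup, so $|H_i|$ is a divisor of $|G|$ with $|H_i| < |G|$. By Theorem \ref{ExpTheorem}, $\exp(G)$ divides $|H_i|$ for every $H_i \in \Pi$, and by hypothesis $\exp(G) = |G|/2$.

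The key step is a short divisibility squeeze: from $\tfrac{|G|}{2} \mid |H_i|$ and $|H_i| \mid |G|$ and $|H_i| < |G|$, the only possibility is $|H_i| = |G|/2$; that is, every member of $\Pi$ has index $2$ in $G$. Since $\Pi$ is a covering it is nonempty (indeed $\sigma(G) \geq 3$ by Proposition \ref{Bounds}), so $G$ has at least one subgroup $H_i$ of index $2$. Because $G$ is non-cyclic we have $|G| \geq 4$, hence $1 < H_i < G$, and a subgroup of index $2$ is always normal. Thus $H_i$ is a proper nontrivial normal subgroup of $G$, contradicting the simplicity of $G$. Therefore no such equal covering exists.

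There is essentially no serious obstacle here; the argument is entirely structural. The one point worth stating carefully is the squeeze in the second paragraph — checking that a multiple of $|G|/2$ that properly divides $|G|$ must equal $|G|/2$ — after which the conclusion follows immediately from the standard fact that index-$2$ subgroups are normal. (One could alternatively phrase the whole thing as: the hypotheses force every maximal subgroup containing a given element to have index $2$, but invoking Theorem \ref{ExpTheorem} directly on the covering members is cleaner.)
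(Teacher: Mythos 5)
Your proposal is correct and follows essentially the same route as the paper's proof: both use Theorem \ref{ExpTheorem} to force a covering member (equivalently, a maximal subgroup) of order $|G|/2$, hence of index $2$, hence normal, contradicting simplicity. Your version just makes the divisibility squeeze explicit, which the paper leaves implicit.
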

\begin{proof}
If $G$ is a group with an equal covering $\Pi$ and $\exp(G) = |G|/2$, then it follows that $G$ has a maximal subgroup $M$ with whose order is $|G|/2$. It follows $[G:M] = 2$ and thus $M \vartriangleleft G$. This contradicts the simplicity of $G$.
\end{proof}
\begin{remark}
Although $S_4$ is not simple, since $A_4$ is a normal subgroup, it does not have an equal covering. This is due to the exponent of $S_4$ being $12$ and $A_4$ is the only subgroup of order 12.
\end{remark}
We have here in this section a collection of useful theorems which will help deduce if a group hasn't equal covering or not
\section{Possible Future Work}
I took interest in this topic due to Dr. Tuval Foguel bringing it to my attention, and after having done the research in literature and receiving his help in deriving the theorems in Section 3, I hope to continue researching into this topic.  The following are questions or ideas I would like to go delve into more once I become familiar with more algebra concepts and definitions, as well as the literature related to this topic:
\begin{enumerate}
    \item Develop a more sophisticated \texttt{\texttt{GAP}} code that can be faster and efficient at determining which finite groups have an equal covering and which ones do not.
    \item For those that do have an equal covering, what would the equal covering number $\varepsilon(G)$ be, that is, 
    \begin{equation*}
        \varepsilon(G) = \min\{|\Pi|\ : \Pi\ \text{is an equal covering of}\ G\}
    \end{equation*}
    \item In addition to developing more sophisticated code, I would like to derive more theorems that indicate which finite groups do (or do not) have an equal coverings. Possibly develop a classification system of which groups have an equal covering, similar to how there is a classification system of partitionable groups.
    \item If a classification system somehow not feasible, I still would like to examine certain type of groups such as the finite simple groups one would find in the ATLAS \cite{conway1985finite}. I would love to either prove or determine the first counterexample to the conjecture at the end of Section 4.
\end{enumerate}
\section{Appendix}
\subsection*{Equal Covering of Statuses of Groups}
This section is dedicated, we will attempt to state which groups up to order 60 as well all finite non-cyclic groups up to the Mathieu group $M_{12}$ have equal coverings (or don't) based on the theorems presented in the Section 3 as well as use \texttt{GAP} \cite{GAP4}.\texttt{GAP} is a software that can be utilized to examine established groups or derive new ones from a computational standpoint, such as determining the order of a group or finding the conjugacy classes of groups.\\
We hope to examine groups that are at least of order 60 and below, as well as the first couple of groups in the ATLAS \cite{conway1985finite}. To start off, we will list out all order of groups up to order 60 that we know groups of such orders will have or not have equal coverings as well as state the reason for the status, where T\# will mean the same as Theorem \#.\vspace{5pt}
\begin{center}
\renewcommand{\arraystretch}{2}
   \begin{tabular}{|c|c|c|}\hline
    Order & Have An Equal Covering? & Reason \\\hline
    1 & No & Trivial\\\hline
    All primes $\leq 60$ & No & T\ref{Cyclic}\\\hline
    4, 8, 9, 16, 25, 27, 32, 49 & Yes (for non-cyclic groups) & T\ref{pgroups}\\ \hline
    \makecell{6, 10, 14, 15, 21, 22, 26, 30, 33, 34,\\ 35, 38, 39, 42, 46, 51, 55, 57, 58} & No & T\ref{distinctp}\\\hline
\end{tabular}\vspace{5pt}\\
Table 1: Groups of Order 60 or Less with Equal Covering Status Based on Order
\end{center}\vspace{5pt}
As one will notice, we are missing the following numbers from the list of the first 60 natural numbers: 12, 18, 20, 24, 28, 36, 40, 44, 45, 48, 50, 52, 54, 56, 60. For each group of each of these orders, we will determine whether they have an equal covering by theorems provided as well as using \texttt{GAP} and ATLAS\cite{conway1985finite} for numerical arguments. The ATLAS is an encyclopedia of finite simple groups such as $A_{11}$ or the Mathieu group $M_{12}$, providing the different families of such groups and for each finite simple group characteristics such as the classification of their maximal subgroups. For each order, the sequence in which we will examine the groups is by the Group ID that is based in \texttt{GAP}'s identification system. For example, the ID of $S_3$ in \texttt{GAP} is [6,1] indicating it is listed as the first group of order 6. Before we go into this part of the section, let us note two things:
\begin{itemize}
    \item that we omit $\mathbb{Z}_n$ for each $n$ since they trivially have no covering;
    \item to identify a group's structure we will use \texttt{GAP} and other sources. Although \texttt{GAP}'s\\ \texttt{StructureDescription} function is useful for this, there will be instances where groups with different ID's will be called the same name. Example: In \texttt{GAP}, groups [20,1] and [20,3] are called "\texttt{C5 : C4}" (or $\mathbb{Z}_5 \rtimes \mathbb{Z}_4$). In which case we will write (1) after the group name to indicate it is the first group with that name, (2) if it is the second group with that name, and so on. \textbf{In some instances, we may write the group in a way where it will be clear if a theorem or other statement was used to determine the equal covering status.}
\end{itemize}
\noindent \textbf{How to Read Table 2:}
\begin{enumerate}
    \item The first column is \textbf{Group ID}, which is the I.D. number of the corresponding group. We read $[m,n]$ to mean this the $n$th group of order $m$.
    \item The second column is the \textbf{name(s)} of the group.
    \item The third column is the \textbf{Exponent} of the group.
    \item The fourth column indicate whether the given group is \textbf{nilpotent} or not. 
    \item The fifth column is the status of whether the group has an \textbf{equal covering or not}. 
    \item The sixth column is the reason why the group has or does not have an equal covering. We will write either write the Corollary(ies),Theorem(s), etc. used and that were mentioned in Section 3and write either C\# (for Corollary \#), T\# (for Theorem \#), R\# (for Remark \#), P\# (for Proposition \#), or state \texttt{GAP} if primarily \texttt{\texttt{GAP}} code was used to determine the status.
\end{enumerate}
\begin{center}
\begin{longtable}{| p{.1\textwidth} | p{.2\textwidth} |p{.1\textwidth}| p{.1\textwidth}|p{.2\textwidth}|p{.1\textwidth}|p{.3\textwidth}|} \hline
Group ID & Name(s) & Exponent & Nilpotent? & Equally Coverable? & Reason\\ \hline 
[12, 1] & $\mathbb{Z}_3 \rtimes \mathbb{Z}_4 \cong Q_{12}$ & 12 & No & No & C\ref{ExpCor} \\ \hline
[12, 3] & $A_4$ & 6 &  No & No & \texttt{GAP} \\ \hline
[12, 4] & $D_{12}$ & 6 & No & Yes & T\ref{EqCovDn}\\ \hline
[12, 5] & $\mathbb{Z}_2^2 \times \mathbb{Z}_3$ &  6 & Yes & Yes & C\ref{directarbitrary}\\ \hline
Group ID & Name(s) & Exponent & Nilpotent? & Equally Coverable? & Reason\\ \hline 
[18, 1] & $D_{18}$ & 18 & No & No & C\ref{directarbitrary} \\ \hline
[18, 3] & $S_3 \times \mathbb{Z}_3$ & 6 & No & No & \texttt{GAP}\\ \hline
[18, 4] & $\mathbb{Z}_3^2 \rtimes \mathbb{Z}_2$ & 6 & No & Yes & \texttt{GAP} \\ \hline
[18, 5] & $\mathbb{Z}_3^2 \times \mathbb{Z}_2$ & 6 & Yes & Yes & T\ref{EqCovDn} \\ \hline
[20, 1] & $Q_{20}$ & 20 & No & No & C\ref{ExpCor}\\ \hline
[20, 3] & $\mathbb{Z}_5 \rtimes \mathbb{Z}_4$ & 20 & No & No & C\ref{ExpCor} \\ \hline
[20, 4] & $D_{20}$ & 10 & No &  Yes & C\ref{directarbitrary} \\ \hline
[20, 5] & $\mathbb{Z}_2^2 \times \mathbb{Z}_5$ & 10 & Yes & Yes & T\ref{EqCovDn}\\ \hline
[24, 1] & $\mathbb{Z}_3 \rtimes \mathbb{Z}_8$ & 24 & No & No & C\ref{ExpCor}\\ \hline
[24, 3] & SL(2,3) $ \cong Q_8 \rtimes \mathbb{Z}_3$ & 12 & No & No & \texttt{GAP} \\ \hline
[24, 4] & $\mathbb{Z}_3 \rtimes Q_8 \cong Q_{24}$ & 12 & No & Yes & C\ref{covering-semi}\\ \hline
[24, 5] & $S_3 \times \mathbb{Z}_4$ & 12 & No & Yes & \texttt{GAP} \\ \hline
[24, 6] & $D_{24}$ & 12 & No & Yes & C\ref{directarbitrary} \\ \hline
[24, 7] & $Q_{12} \times \mathbb{Z}_2$ & 12 & No & Yes & \texttt{GAP}\\ \hline
[24, 8] & $(\mathbb{Z}_3 \times \mathbb{Z}_2^2) \rtimes \mathbb{Z}_2$ & 12 & No & Yes & \texttt{GAP}\\ \hline
[24, 9] & $\mathbb{Z}_{12} \times \mathbb{Z}_2$ & 12 & Yes & Yes & T\ref{nilpotent}\\ \hline
[24, 10] & $D_8 \times \mathbb{Z}_3$ & 12 & Yes & Yes & C\ref{directarbitrary}\\ \hline
[24, 11] & $Q_8 \times \mathbb{Z}_3$ & 12 & Yes & Yes & T\ref{nilpotent} \\ \hline
[24, 12] & $S_4$ & 12 & No & No & R3\\\hline
[24, 13] & $A_4 \times \mathbb{Z}_2$ & 6 & No & No & \texttt{GAP}\\ \hline
[24, 14] & $S_3 \times \mathbb{Z}_2^2$ & 6 & No  & Yes & C\ref{directarbitrary}\\ \hline
[24, 15] & $\mathbb{Z}_2^3 \times \mathbb{Z}_3$ & 6 & Yes & Yes & C\ref{directarbitrary}\\ \hline
[28, 1] & $\mathbb{Z}_7 \rtimes \mathbb{Z}_4$ & 28 & No & No & C\ref{ExpCor}\\ \hline
[28, 3] & $D_{28}$ & 14 & No & Yes & T\ref{EqCovDn} \\ \hline
[28, 4] & $\mathbb{Z}_2^2 \times \mathbb{Z}_7$ & 14 & Yes & Yes & C\ref{directarbitrary}\\ \hline
[36, 1] & $\mathbb{Z}_9 \rtimes \mathbb{Z}_4$ & 36 & No & No & C\ref{ExpCor}\\ \hline
[36, 3] & $\mathbb{Z}_2^2 \rtimes \mathbb{Z}_9$ & 18 & No & No & \texttt{GAP} \\ \hline
[36, 4] & $D_{36}$ & 18 & No & Yes & T\ref{EqCovDn} \\ \hline
[36, 5] & $\mathbb{Z}_2^2 \times \mathbb{Z}_9$ & 18 & Yes & Yes & T\ref{nilpotent}\\\hline
[36, 6] & $(\mathbb{Z}_3 \rtimes \mathbb{Z}_4) \times \mathbb{Z}_3$ & 12 & No & No & \texttt{GAP} \\ \hline
[36, 7] & $\mathbb{Z}_3^2 \rtimes \mathbb{Z}_4$(1) & 12 & Yes & Yes & \texttt{GAP} \\\hline
[36, 8] & $\mathbb{Z}_3^2 \times \mathbb{Z}_4$ & 12 & Yes & Yes & C\ref{directarbitrary} \\ \hline
[36, 9] & $\mathbb{Z}_3^2 \rtimes \mathbb{Z}_4$(2) & 12 & No & No & \texttt{GAP} \\\hline
[36, 10] & $S_3 \times S_3$ & 6 & No & Yes & \texttt{GAP}\\ \hline
[36, 11] & $A_4 \times \mathbb{Z}_3$ & 6 & No & Yes & \texttt{GAP}\\ \hline
[36, 12] & $S_3 \times \mathbb{Z}_6$ & 6 & No & Yes & \texttt{GAP}\\ \hline 
Group ID & Name(s) & Exponent & Nilpotent? & Equally Coverable? & Reason\\ \hline 
[36, 13] & $(\mathbb{Z}_3^2 \rtimes \mathbb{Z}_2) \times \mathbb{Z}_2$ & 6 & No & Yes & \texttt{GAP}\\ \hline
[36, 14] & $\mathbb{Z}_3^2 \times \mathbb{Z}_2^2$ & 6 & Yes & Yes & C\ref{directarbitrary}\\ \hline
[40, 1] & $\mathbb{Z}_5 \rtimes \mathbb{Z}_8$(1) & 40 & No & No & C\ref{ExpCor}\\ \hline
[40, 3] & $\mathbb{Z}_5 \rtimes \mathbb{Z}_8$(2) & 40 & No & No & C\ref{ExpCor}\\ \hline
[40, 4] & $\mathbb{Z}_5 \rtimes Q_8$ & 20 & No & Yes & C\ref{covering-semi}\\ \hline
[40, 5] & $D_{10} \times \mathbb{Z}_4 $ & 20 & No & Yes & \texttt{GAP}\\ \hline
[40, 6] & $D_{40}$ & 20 & No & Yes & T\ref{EqCovDn}\\ \hline
[40, 7] & $(\mathbb{Z}_5 \rtimes \mathbb{Z}_4) \times \mathbb{Z}_2$ & 20 & No & Yes & \texttt{GAP}\\ \hline
[40, 8] & $(\mathbb{Z}_{2}^2 \times \mathbb{Z}_5) \rtimes \mathbb{Z}_2$ & 20 & No & Yes & \texttt{GAP}\\ \hline
[40, 9] & $\mathbb{Z}_5 \times \mathbb{Z}_4 \times \mathbb{Z}_2$ & 20 & Yes & Yes & C\ref{directarbitrary}\\ \hline
[40, 10] & $D_8 \times \mathbb{Z}_5$ & 20 & Yes & Yes & C\ref{directarbitrary}\\\hline
[40, 11] & $Q_8 \times \mathbb{Z}_5$ & 20 & Yes & Yes & T\ref{nilpotent}\\\hline
[40, 12] & $(\mathbb{Z}_5 \rtimes \mathbb{Z}_4) \times \mathbb{Z}_2$ & 20 & No  & Yes & \texttt{GAP}\\\hline
[40, 13] & $D_{10} \times \mathbb{Z}_2^2$ & 10 & No & Yes & C\ref{directarbitrary}\\\hline
[40, 14] & $\mathbb{Z}_2^3\times \mathbb{Z}_5$ & 10 & Yes & Yes & C\ref{directarbitrary}\\\hline
[44, 1] & $\mathbb{Z}_{11} \rtimes \mathbb{Z}_4$ & 44 & No & No & C\ref{ExpCor}\\\hline
[44, 3] & $D_{44}$ & 22 & No & Yes & T\ref{EqCovDn}\\\hline
[44, 4] & $\mathbb{Z}_2^2 \times \mathbb{Z}_{11}$ & 22 & Yes & Yes & C\ref{directarbitrary}\\\hline
[45, 2] & $\mathbb{Z}_3^2 \times \mathbb{Z}_5$ & 15 & Yes & Yes & C\ref{directarbitrary}\\\hline
[48, 1] & $\mathbb{Z}_3 \rtimes \mathbb{Z}_{16}$ & 48 & No  & No & C\ref{ExpCor}\\\hline
[48, 3] & $\mathbb{Z}_4^2 \rtimes \mathbb{Z}_3$ & 12 & No & Yes & \texttt{GAP}\\\hline
[48, 4] & $S_3 \times \mathbb{Z}_8$ & 24 & No & Yes & \texttt{GAP}\\\hline
[48, 5] & $\mathbb{Z}_{24} \rtimes \mathbb{Z}_2$(1) & 24 & No & Yes & \texttt{GAP}\\\hline
[48, 6] & $\mathbb{Z}_{24} \rtimes \mathbb{Z}_2$(2) & 24 & No & Yes & \texttt{GAP}\\\hline
[48, 7] & $D_{48}$ & 24 & No  & Yes & T\ref{EqCovDn}\\\hline
[48, 8] & $\mathbb{Z}_3 \rtimes Q_{16}$(1) & 24 & No & Yes & C\ref{covering-semi}\\\hline
[48, 9] & $(\mathbb{Z}_3 \rtimes \mathbb{Z}_8) \times \mathbb{Z}_2$ & 24 & No & Yes & \texttt{GAP}\\\hline
[48, 10] & $(\mathbb{Z}_3 \rtimes \mathbb{Z}_8) \rtimes \mathbb{Z}_2$(1) & 24 & No & Yes & \texttt{GAP}\\ \hline
[48, 11] & $(\mathbb{Z}_3 \rtimes \mathbb{Z}_4) \times \mathbb{Z}_4$ & 12 & No  & Yes & \texttt{GAP}\\\hline
[48, 12] & $(\mathbb{Z}_3 \rtimes \mathbb{Z}_4) \rtimes \mathbb{Z}_4$ & 12 & No  & Yes & \texttt{GAP}\\\hline
[48, 13] & $\mathbb{Z}_{12} \rtimes \mathbb{Z}_4$ & 12 & No & Yes & \texttt{GAP}\\\hline
[48, 14] & $(\mathbb{Z}_{12} \times \mathbb{Z}_2) \rtimes \mathbb{Z}_2$(1) & 12 & No & Yes & \texttt{GAP}\\\hline
[48, 15] & $(D_8 \times \mathbb{Z}_3) \rtimes \mathbb{Z}_2$ & 24 & No & Yes & \texttt{GAP}\\\hline
[48, 16] & $(\mathbb{Z}_3 \rtimes \mathbb{Z}_8) \rtimes \mathbb{Z}_2$(2) & 24 & No & Yes & \texttt{GAP}\\\hline
[48, 17] & $(Q_8 \times \mathbb{Z}_3) \rtimes \mathbb{Z}_2$ & 24 & No & Yes & \texttt{GAP}\\\hline
[48, 18] & $\mathbb{Z}_3 \rtimes Q_{16}$(2) & 24 & No & Yes & C\ref{covering-semi}\\\hline
Group ID & Name(s) & Exponent & Nilpotent? & Equally Coverable? & Reason\\ \hline 
[48, 19] & $((\mathbb{Z}_3 \rtimes \mathbb{Z}_4) \times \mathbb{Z}_2) \rtimes \mathbb{Z}_2$ & 12 & No & Yes & \texttt{GAP}\\\hline
[48, 20] & $\mathbb{Z}_4^2 \times \mathbb{Z}_3$ & 12 & Yes & Yes & C\ref{directarbitrary}\\\hline
[48, 21] & $((\mathbb{Z}_4 \times \mathbb{Z}_2) \rtimes \mathbb{Z}_2) \times \mathbb{Z}_3$ & 12 & Yes & Yes & T\ref{nilpotent}\\\hline
[48, 22] & $(\mathbb{Z}_4 \rtimes \mathbb{Z}_4) \times \mathbb{Z}_3$ & 12 & Yes & Yes & T\ref{nilpotent}\\\hline
[48, 23] & $\mathbb{Z}_{8} \times \mathbb{Z}_3 \times \mathbb{Z}_2$ & 24 & Yes & Yes & T\ref{nilpotent}\\\hline
[48, 24] & $(\mathbb{Z}_8 \rtimes \mathbb{Z}_2) \times \mathbb{Z}_3$ & 24 & Yes & Yes & T\ref{nilpotent}\\\hline
[48, 25] & $D_{16} \times \mathbb{Z}_3$ & 24 & Yes & Yes & C\ref{directarbitrary}\\\hline
[48, 26] & $QD_{16} \times \mathbb{Z}_3$ & 24 & Yes & Yes & C\ref{directarbitrary}\\\hline
[48, 27] & $Q_{16} \times \mathbb{Z}_3$ & 24 & Yes & Yes & C\ref{directarbitrary}\\\hline
[48, 28] & $\langle 2,3,4\rangle$ & 24 & No & No & \texttt{GAP} \\\hline
[48, 29] & GL$(2,3)$ & 24 & No & No & \texttt{GAP}\\\hline
[48, 30] & $A_4 \rtimes \mathbb{Z}_4$ & 12 & No & No & \texttt{GAP}\\\hline
[48, 31] &$A_4 \times \mathbb{Z}_4$ & 12 & No & No & \texttt{GAP}\\\hline
[48, 32] & SL$(2,3) \times \mathbb{Z}_2$ & 12 & No & No & \texttt{GAP}\\\hline
[48, 33] & SL$(2,3) \rtimes \mathbb{Z}_2$ & 12 & No & No & \texttt{GAP} \\\hline
[48, 34] & $(\mathbb{Z}_3 \rtimes Q_8) \times \mathbb{Z}_2$  & 12 & No & Yes & C\ref{covering-semi}\\\hline
[48, 35] & $S_3 \times \mathbb{Z}_4 \times \mathbb{Z}_2$ & 12 & No & Yes & \texttt{GAP}\\\hline
[48, 36] & $D_{24} \times \mathbb{Z}_2$ & 12 & No & Yes & C\ref{directarbitrary}\\\hline
[48, 37] & $(\mathbb{Z}_{12} \times \mathbb{Z}_2) \rtimes \mathbb{Z}_2$(2) & 12 & No & Yes & \texttt{GAP}\\\hline
[48, 38] & $D_8 \times S_3$ & 12 & No & Yes & C\ref{directarbitrary}\\\hline
[48, 39] & $((\mathbb{Z}_3 \rtimes \mathbb{Z}_4) \times \mathbb{Z}_2) \rtimes \mathbb{Z}_2$ & 12 & No & Yes & \texttt{GAP}\\\hline
[48, 40] & $Q_8 \times S_3$ & 12 & No & Yes & C\ref{directarbitrary}\\\hline
[48, 41] & $(S_3  \times \mathbb{Z}_4) \rtimes \mathbb{Z}_2$ & 12 & No & Yes & \texttt{GAP}\\\hline
[48, 42] & $(\mathbb{Z}_3 \rtimes \mathbb{Z}_4) \times \mathbb{Z}_2^2$ & 12 & No & Yes & C\ref{directarbitrary}\\\hline
[48, 43] & $((\mathbb{Z}_6 \times \mathbb{Z}_2) \rtimes \mathbb{Z}_2) \times \mathbb{Z}_2$ & 12  & No & Yes & \texttt{GAP}\\\hline
[48, 44] & $\mathbb{Z}_2^2 \times \mathbb{Z}_{12}$ & 12 & Yes & Yes & C\ref{directarbitrary}\\\hline
[48, 45] & $D_8 \times \mathbb{Z}_6$ & 12 & Yes & Yes & C\ref{directarbitrary}\\\hline
[48, 46] & $Q_8 \times \mathbb{Z}_6$ & 12 & Yes & Yes & C\ref{directarbitrary}\\\hline
[48, 47] & $((\mathbb{Z}_4  \times \mathbb{Z}_2) \rtimes \mathbb{Z}_2) \times \mathbb{Z}_3$ & 12 & Yes & Yes & T\ref{nilpotent}\\\hline
[48, 48] & $S_4 \times \mathbb{Z}_2$ & 12 & No & Yes & \texttt{GAP}\\\hline
[48, 49] & $A_4 \times \mathbb{Z}_2^2$ & 6 & No & Yes & C\ref{directarbitrary}\\\hline
[48, 50] & $\mathbb{Z}_2^4 \rtimes \mathbb{Z}_3$ & 6 & No & Yes & \texttt{GAP}\\\hline
[48, 51] & $\mathbb{Z}_2^3 \times S_3$  & 6 & No & Yes & C\ref{directarbitrary}\\\hline
[48, 52] & $\mathbb{Z}_2^3 \times \mathbb{Z}_3$ & 6 & Yes & Yes & C\ref{directarbitrary}\\\hline
[50, 1] & $D_{50}$ & 50 & No & No & C\ref{ExpCor}\\\hline
[50, 3] & $D_{10} \times \mathbb{Z}_5$ & 10 & No & No & \texttt{GAP}\\\hline
Group ID & Name(s) & Exponent & Nilpotent? & Equally Coverable? & Reason\\ \hline 
[50, 4] & $\mathbb{Z}_5^2 \rtimes \mathbb{Z}_2$ & 10 & No & Yes & \texttt{GAP}\\\hline
[50, 5] & $ \mathbb{Z}_5^2 \times \mathbb{Z}_2$ & 10 & Yes & Yes & C\ref{directarbitrary}\\\hline
[52, 1] & $\mathbb{Z}_{13} \rtimes \mathbb{Z}_4$(1) & 52 & No & No & C\ref{ExpCor}\\\hline
[52, 3] & $\mathbb{Z}_{13} \rtimes \mathbb{Z}_4$(2) & 52 & No & No & C\ref{ExpCor}\\\hline
[52, 4] & $D_{52}$ & 26 & No & Yes & T\ref{EqCovDn}\\\hline
[52, 5] & $\mathbb{Z}_2^2 \times \mathbb{Z}_{13}$ & 26 & Yes & Yes & C\ref{directarbitrary}\\\hline
[54, 1] & $D_{54}$ & 54 & No & No & T\ref{EqCovDn}\\\hline
[54, 3] & $D_{18} \times \mathbb{Z}_3$ & 18 & No & No & \texttt{GAP}\\\hline
[54, 4] & $S_3 \times \mathbb{Z}_9$ & 18 & No & No & \texttt{GAP}\\\hline
[54, 5] & $(\mathbb{Z}_3^2 \rtimes \mathbb{Z}_3) \rtimes \mathbb{Z}_2$(1) & 6 & No & No & \texttt{GAP}\\\hline
[54, 6] & $(\mathbb{Z}_9 \rtimes \mathbb{Z}_3) \rtimes \mathbb{Z}_2$ & 18 & No & No & \texttt{GAP}\\\hline
[54, 7] & $(\mathbb{Z}_9 \times \mathbb{Z}_3) \rtimes \mathbb{Z}_2$ & 18 & No & Yes & \texttt{GAP}\\\hline
[54, 8] & $(\mathbb{Z}_3^2 \rtimes \mathbb{Z}_3) \rtimes \mathbb{Z}_2$(2) & 6 & No & Yes & \texttt{GAP}\\\hline
[54, 9] & $\mathbb{Z}_{9} \times \mathbb{Z}_3 \times \mathbb{Z}_2$ & 18 & Yes & Yes & T\ref{nilpotent}\\\hline
[54, 10] & $(\mathbb{Z}_3^2 \rtimes \mathbb{Z}_3) \times \mathbb{Z}_2$ & 6 & Yes & Yes & T\ref{nilpotent}\\\hline
[54, 11] & $(\mathbb{Z}_9 \rtimes \mathbb{Z}_3) \times \mathbb{Z}_2$ & 18 & Yes & Yes & T\ref{nilpotent}\\\hline
[54, 12] & $\mathbb{Z}_3^2 \times S_3$ & 6 & No & Yes & C\ref{directarbitrary}\\\hline
[54, 13] & $(\mathbb{Z}_3^2 \rtimes \mathbb{Z}_2) \times \mathbb{Z}_3$ & 6 & No & Yes & C\ref{directarbitrary}\\\hline
[54, 14] & $\mathbb{Z}_3^3 \rtimes \mathbb{Z}_2$ & 6 & No & Yes & \texttt{GAP}\\\hline
[54, 15] & $\mathbb{Z}_3^3 \times \mathbb{Z}_2$ & 6 & Yes & Yes & C\ref{directarbitrary}\\\hline
[56, 1] & $\mathbb{Z}_7 \rtimes \mathbb{Z}_8$ & 56 & No & No & C\ref{ExpCor}\\\hline
[56, 3] & $\mathbb{Z}_7 \rtimes Q_8$ & 28 & No & Yes & C\ref{covering-semi}\\\hline
[56, 4] & $D_{14} \times \mathbb{Z}_4$ & 28 & No & Yes & \texttt{GAP}\\\hline
[56, 5] & $D_{56}$ & 28 & No & Yes & T\ref{EqCovDn}\\\hline
[56, 6] & $(\mathbb{Z}_7 \rtimes \mathbb{Z}_4) \times \mathbb{Z}_4$ & 18 & No & Yes & \texttt{GAP}\\\hline
[56, 7] & $(\mathbb{Z}_{14} \times \mathbb{Z}_2) \rtimes \mathbb{Z}_2$ & 28 & No & Yes & \texttt{GAP}\\\hline
[56, 8] & $\mathbb{Z}_{7} \times \mathbb{Z}_4 \times \mathbb{Z}_2$ & 28 & Yes & Yes & T\ref{nilpotent}\\\hline
[56, 9] & $D_8 \times \mathbb{Z}_7$ & 28 & Yes & Yes & C\ref{directarbitrary}\\\hline
[56, 10] & $Q_8 \times \mathbb{Z}_7$ & 28 & Yes & Yes & C\ref{directarbitrary}\\\hline
[56, 11] & $\mathbb{Z}_2^3 \rtimes \mathbb{Z}_7$ & 14 & No & Yes & \texttt{GAP}\\\hline
[56, 12] & $D_{14} \times \mathbb{Z}_2^2$ & 14 & No & Yes & C\ref{directarbitrary}\\\hline
[56, 13] & $\mathbb{Z}_2^3 \times \mathbb{Z}_7$ & 14 & Yes & Yes & C\ref{directarbitrary}\\\hline
[60, 1] & $(\mathbb{Z}_3 \rtimes \mathbb{Z}_4) \times \mathbb{Z}_5$ & 60 & No & No & C\ref{ExpCor}\\\hline
[60, 2] & $(\mathbb{Z}_5 \rtimes \mathbb{Z}_4) \times \mathbb{Z}_3(1)$ & 60 & No & No & C\ref{ExpCor}\\\hline
[60, 3] & $\mathbb{Z}_{15} \rtimes \mathbb{Z}_4$ & 60 & No & No & C\ref{ExpCor}\\\hline
[60, 5] & $A_5$ & 30 & No & No & P\ref{simpleexp}\\\hline
Group ID & Name(s) & Exponent & Nilpotent? & Equally Coverable? & Reason\\ \hline 
[60, 6] & $(\mathbb{Z}_5 \rtimes \mathbb{Z}_4) \times \mathbb{Z}_3(2)$ & 60 & No & No & C\ref{ExpCor}\\\hline
[60, 7] & $\mathbb{Z}_{15} \rtimes \mathbb{Z}_4$ & 60 & No & No & C\ref{ExpCor}\\\hline
[60, 8] & $D_{10} \times S_3$ & 30 & No & Yes & \texttt{GAP}\\\hline
[60, 9] & $A_4 \times \mathbb{Z}_5$ & 30 & No & No & \texttt{GAP}\\\hline
[60, 10] & $D_{10} \times \mathbb{Z}_6$ & 30 & No & Yes & \texttt{GAP}\\\hline
[60, 11] & $S_3 \times \mathbb{Z}_{10}$ & 30 & No & Yes & \texttt{GAP}\\\hline
[60, 12] & $D_{60}$ & 30 & No & Yes & T\ref{EqCovDn}\\\hline
[60, 13] & $\mathbb{Z}_2^2 \times \mathbb{Z}_{15}$ & 30 & Yes & Yes & C\ref{directarbitrary}\\\hline
\end{longtable}
Table 2: Classification of Remaining Groups Up to Order 60
\end{center}
\vspace{5pt}
\begin{center}
\begin{tabular}{|c|c|c|c|c|c|}\hline
   Group  &  Order & Exponent & Nilpotent? & Equally Coverable? & Reason\\\hline
   $A_5 \cong L_2(4) \cong L_2(5)$  & 60 & 30 & No & No & P\ref{simpleexp}\\\hline
   $A_6 \cong L_2(9) \cong S_4(2)'$ & 360 & 60 & No & No & \texttt{GAP}\\\hline
   $L_2(8) \cong R(3)'$ & 504 & 126 & No & No & \texttt{GAP}\\\hline
   $L_2(11)$ & 660 & 330 & No & No & P\ref{simpleexp}\\\hline
   $L_2(13)$ & 1092 & 546 & No & No & P\ref{simpleexp}\\\hline
   $L_2(17)$ & 2448 & 1224 & No & No & P\ref{simpleexp}\\\hline
   $A_7$ & 2520 & 420 & No & No & \texttt{GAP} \\\hline
   $L_2(19)$ & 3420 & 1710 & No & No & P\ref{simpleexp}\\\hline
   $L_2(16)$ & 4080 & 510 & No & No & \texttt{GAP}\\\hline
   $L_3(3)$ & 5616 & 312 & No & No & \texttt{GAP}\\\hline
   $U_3(3) \cong G_2(2)'$ & 6048 & 168 & No & No & \texttt{GAP}\\\hline
   $L_2(23)$ & 6072 & 3036 & No & No & P\ref{simpleexp}\\\hline
   $L_2(25)$ & 7800 & 780 & No & No & \texttt{GAP}\\\hline
   $M_{11}$ & 7920 & 1320 & No & No & \texttt{GAP}\\\hline
   $L_2(27)$ & 9828 & 546 & No & No & \texttt{GAP}\\\hline
   $L_2(29)$ & 12180 & 6090 & No & No & P\ref{simpleexp}\\\hline
   $L_2(31)$ & 14880 & 7440 & No & No & P\ref{simpleexp}\\\hline
   $A_8 \cong L_4(2)$ & 20160 & 420 & No & No & \texttt{GAP}\\\hline
   $L_3(4)$ & 20160 & 420 & No & No & \texttt{GAP}\\\hline
   $U_4(2) \cong S_4(3)$ & 25920 & 180 & No & No & \texttt{GAP}\\\hline
   $Sz(8)$ & 29120 & 1280 & No & No & \texttt{GAP}\\\hline
   $L_2(32)$ & 32736 & 2046 & No & No & \texttt{GAP}\\\hline
   $U_3(4)$ & 62400 & 780 & No & No & \texttt{GAP}\\\hline
   $M_{12}$ & 96040 & 1320 & No & No & \texttt{GAP}\\\hline
\end{tabular}\vspace{5pt}\\
Table 3: All Finite Simple Groups Of Order Less than 100,000
\end{center}\newpage
Something interesting to note from Table 2 is the fact that all finite simple groups of order less than 100,000 do not have an equal covering. The question now arises whether the following conjecture is true:\vspace{5pt}\\
\textbf{Conjecture:} If $G$ is a finite simple group, then $G$ has no equal covering.

\subsection{\texttt{\texttt{GAP}} Code}
In Table 1, we determined the equal covering status for the non-cyclic groups up to order 60 that we were not able to determine using our theorems alone, and for Table 2 we had performed the same for the first 20 non-cyclic finite simple groups. In many instances, we had to rely on \texttt{\texttt{GAP}} to provide us an insight into whether or not a given group has an equal covering. Before demonstrating the code I used to determine if a group had an equal covering, let us familiarize ourselves with some of the functions that will be used in the code.
\begin{enumerate}
    \item \texttt{DivisorsInt(n)} is a function whose output is the set of positive divisors for the integer \texttt{n}
    \item \texttt{SmallGroup(n,m)} corresponds to the ID of  $m$th group of order $n$ in \texttt{\texttt{GAP}}'s library of groups (primarily applicable to the groups in Table 1)
    \item \texttt{MaximalSubgroups(G)} provides the collection of all maximal subgroups of group $G$
    \item \texttt{Append(set1, set2)} is a function that takes in two sets \texttt{set1} and \texttt{set2} as inputs, and appends the elements of \texttt{set2} to the set of elements of \texttt{set1}
    \item \texttt{ShallowCopy(set)} makes a mutable copy of the set of elements in \texttt{set}. This is useful for wokring with sets that are immutable, that is, sets that cannot be manipulated in any way.
\end{enumerate}
Now that we know the parts of the code that one may not immediately recognize or know, we will now see the general code I used.\vspace{5pt}\\
Let's say for a given group with the ID $[n,m]$ I want to determine whether it has an equal covering. After computing the exponent of $G$, using \texttt{Exponent(G)}, we use the following algorithm for every positive integer $i$ that is 1) a proper divisor of the order of $G$ and 2) is divisible by the exponent of $G$:
\begin{verbatim}
(1) G := SmallGroup(n,m); S := Subgroups(G);
(2) D := DivisorsInt(Order(G)); orders := [];
(3) for d in D do
(4) if d < Order(G) and RemInt(d, Exponent(G)) = 0 then
(5) Append(orders, [d]);
(6) fi;
(7) od;
(8) truthvalues := [];
(9) for d in orders do
(10) union := [];
(11) for s in S do 
(12) if Order(s) = d then 
(13) Append(union, ShallowCopy(Elements(s)));
(14) fi;
(15) do;
(16) Append(truthvalues, [Elements(G) = Set(union)]);
(17) od;
(18) truthvalues;
\end{verbatim}
We can take a closer look at this code to see what it really does. In line:\vspace{5pt}\\
    (1) I designate what group we are dealing with by \texttt{G} and its set of subgroups by \texttt{S}. Note: Most of the groups in Table 2 could not be designated by a Group ID as we did for the groups in Table 1 and instead had to be set using group name functions in \texttt{\texttt{GAP}}. As an example, for $M_{11}$ I had to write \texttt{G := MathieuGroup(11)}.\vspace{5pt}\\
    (2) - (7) I compute the set of integers that are proper divisors of the order of $G$ and are divisible by the exponent of $G$, and call this set \texttt{orders}.\vspace{5pt}\\
    (8)-(17) I determine for each $d$ in \texttt{orders} if the elements of $G$ equals the union of all maximal subgroups in \texttt{S} of order $d$.\vspace{5pt}\\  
If after executing line 18, we produce a set of truth value(s). We determine if a group has an equal covering if at least one \texttt{true} appears. If none appear, we can conclude $G$ has no equal covering.
\subsubsection*{The Case of $M_{12}$}
When running the code shown in the last page for Mathieu Group $M_{12}$, I had run into the issue of \texttt{\texttt{GAP}} not compiling due the large number of subgroups $M_{12}$ possessed (214,871 to be exact). As \texttt{\texttt{GAP}} mentioned, I used the \texttt{ConjugacyClassesSubgroups} function, which returns the set of all conjugacy classes of the group each having a representative. As a work-around, I came up with the following code to determine whether $M_{12}$ possessed an equal covering.
\begin{verbatim}
(1) G := MathieuGroup(12); C := ConjugacyClassesSubgroups(G); classes := [];
(2) for i in [1..Size(C)] do
(3) if RemInt(Order(C[i][1]), Exponent(G)) = 0 and Order(C[i][1]) < Order(G) then
(4) Add(classes, C[i]);
(5) fi;
(6) od;
\end{verbatim}
What I have done is come up with the set of all conjugacy classes of subgroups of $M_{12}$ that contain a subgroup whose order is divisible by the the exponent of $M_{12}$. As a note, since any two subgroups in the same conjugacy class are isomorphic and therefore have the same order, we will attempt to take the set-theoretic union of all proper subgroups satisfying the property mentioned above.\vspace{5pt}\\
After line (5), we must print the set \texttt{classes}, which will tell us the classes that contain proper subgroups whose orders are divisible by the exponent of $M_{12}$. It turns out that \texttt{classes} contains 2 elements, the conjucacy class of two proper subgroups. Next comes code similar to the last page, where we will take the union of all the subgroups in \texttt{classes[1]} and \texttt{classes[2]}(both of which are sets of subgroups of order 7920) and determine whether the union is indeed equal to the set of elements in $M_{12}$.
\begin{verbatim}
(6) c1 := classes[1]; c2 := classes[2]; union:= [];
(7) for s in c1 do
(8) Append(union, ShallowCopy(Elements(s)));
(9) od;
(10) for s in c2 do
(11) Append(union, ShallowCopy(Elements(s)));
(12) od;
(13) Elements(G) = Set(union);
\end{verbatim}
After we execute line 13, we produce \texttt{false} indicating $M_{12}$ has no equal covering.
\newpage


\begin{thebibliography}{99}
\bibitem{atanasov2014loops} R. Atanasov and T. Foguel. Loops that are partitioned by groups. {\em Journal of Group Theory}, 17(5): 851-861, 2014.

\bibitem{baer1961partitionen} R. Baer. Partitionen endlicher gruppen. {\em Mathematische Zeitschrift}, 75(10:333-372, 1961.

\bibitem{bell1997analogue} H.E. Bell, A.A. Klein, and L. Charlotte-Kappe. An Analogue for Rings of a Group Problem of P. Erd{\H{o}}s and BH Neumann {\em Acta Mathematica Hungarica} 77(1):57-67, 1997.

\bibitem{bhargava2009groups} Groups as unions of proper subgroups. {\em American Mathematical Monthly}, 116(5):413-422, 2009.

\bibitem{bruckheimer} M. Bruckheimer, A.C. Bryan, and A. Muir. Groups Which Are the Union of Three Subgroups. {\em American Mathematical Monthly}, 77(1):52-57, 1970.

\bibitem{kappe2001analogue} L. Charlotte-Kappe, J.C. Lennox, and J. Weigold. An analogue for semigroups of a group problem of P. Erd{\"o}s and BH Neumann. {\em Bulletin of the Australian Mathematical Society}, 63(1):59-66, 2001.

\bibitem{kappe2016covering} L. Charlotte-Kappe, D. Nikolova-Popova, and E. Swartz. On the covering number of small symmetric groups and some sporadic simple groups. {\em Groups Complexity Cryptology}, 8(2):135-154, 2016.

\bibitem{cohn1994n} J.H.E. Cohn. On n-sum groups. {\em Mathematica Scandinavica}, pages 44-58, 1994.

\bibitem{conway1985finite} J.H. Conway, R.T. Curtis, S.P. Norton, R.A. Parker, and R.A. Wilson. Finite Groups, 1985.

\bibitem{foguel2005loops} T. Foguel and L. Charlotte-Kappe. On loops covered by subloops. {\em Expositiones Mathematicae}, 23(3):255-270, 2005.

\bibitem{semi-partitions} T. Foguel, A. Mahmoudifar, A.R. Moghaddamfar, and J. Schmidt. Groups with Semi-Partitions. {\em Journal of Algebra and its Applications}, 2020.

\bibitem{foguel2008groups} T. Foguel and M. Ragland. Groups with a finite covering by isomorphic abelian subgroups {\em Contemporary Mathematics: Computational Group Theory and the Theory of Groups}, 470(2):75-88, 2008

\bibitem{GAP4} The GAP~Group, \emph{GAP -- Groups, Algorithms, and Programming}, 
  Version $4.11.1$;  2021,
  \url{https://www.gap-system.org}. %
  
\bibitem{garonzi2019integers} M. Garonzi, L. Charlotte-Kappe, and E. Swartz. On integers that are covering numbers of groups. {\em Experimental Mathematics}, pages 1-19, 2019.

\bibitem{haber1959groups} S. Haber and A. Rosenfeld. Groups as unions of proper subgroups. {\em The American Mathematical Monthly}, 66(6): 491-494, 1959.

\bibitem{isaacs1973equally} I.M. Isaacs. Equally partitioned groups. {\em Pacific Journal of Mathematics}, 49(1):109-116, 1973

\bibitem{jafari2018criteria} L. Jafari Taghvasani and M. Zarrin. Criteria for nilpotency of groups via partitions. {\em Mathematische Nachrichten}, 291(17-18):2585-2589, 2018.

\bibitem{kegel1961nicht} O. Kegel.  Nicht-einfache Partitionen endlicher gruppen. {\em Archiv der Mathematik}, 12(1):170-175, 1961.

\bibitem{maroti2005covering} A. Mar{\'o}ti. Covering the symmetric groups with proper subgroups {\em Journal of Combinatorial Theory, Series A}, 110(1):97-111, 2005.

\bibitem{miller1906groups} G.A. Miller. Groups in which all the operators are contained in a series of subgroups such that any two have only identity in common. {\em  Bulletin of the American Mathematical Society} 12(9):446-449, 1906.

\bibitem{neumann_1976} B.H. Neumann. A problem of Paul Erdös on groups. {\em Journal of the Australian Mathematical Society}, 21(4):467-472, 1976.

\bibitem{neumann1954groups} B.H. Neumann. Groups covered by finitely many cosets {\em Publ. Math. Debrecen}, 3(3-4):227:242, 1954.

\bibitem{rotman2012introduction} J.J. Rotman. An Introduction to the Theory of Groups. 1995.

\bibitem{scorza} G. Scorza. I gruppi che possone pensarsi come somma di tre lori sottogruppi. {\em Boll. Un. Mat. Ital}, 1926.

\bibitem{sizemorepartition} N. Sizemore and T. Foguel. Partition Numbers of Finite Solvable Groups. {\em Advances in Group Theory and Applications}. 2018.

\bibitem{suzuki1961finite} M. Suzuki. On a finite group with a partition. {\em Archiv der Mathematik}, 12(1):241-254, 1961.

\bibitem{tomkinson} M.J. Tomkinson. Groups as the union of proper subgroups. {\em Mathematica Scandinavica}, 81: 191198, 1997.

\bibitem{zappa2003partitions} G. Zappa. Partitions and other coverings of finite groups. {\em Illinois Journal of Mathematics}, 47(1-2):571-580, 2003.

\end{thebibliography}
\end{document}